\documentclass[12pt]{amsart}
%,reqno
%%%%%%%%%%%%%%%%%%%%%%%%%%%%%%%%%%%%%%%%%%%%%%%%%%%%%%%%%%%%%%%%%%%%%%%%%%%%%%%%%%%%%%%%%%%%%%%%%%%%%%%%%%%%%%%%%%%%%%%%%%%%%%%%%%%%%%%%%%%%%%%%%%%%%%%%%%%%%%%%%%%%%%%%%%%%%%%%%%%%%%%%%%%%%%%%%%%%%%%%%%%%%%%%%%%%%%%%%%%%%%%%%%%%%%%%%%%%%%%%%%%%%%%%%%%%
\usepackage[colorlinks=true,urlcolor=blue,linkcolor=red,citecolor=magenta]{hyperref}
\usepackage{amsfonts}
\usepackage{amsmath,amssymb,amsthm,latexsym,graphicx,graphics}
\usepackage{multicol}
\usepackage{eurosym}
\usepackage{amsfonts}
\usepackage{amsmath,amssymb,amsfonts}
%mathtools,,paralist
\usepackage{amsfonts}
\usepackage{graphicx}
\usepackage{graphics}
\usepackage{url,hyperref}
\usepackage{xcolor}
\usepackage[margin=1.25in]{geometry}
%\usepackage{lineno}
%\linenumbers
%\usepackage{lineno}

\setcounter{MaxMatrixCols}{10}
%TCIDATA{OutputFilter=Latex.dll}
%TCIDATA{Version=5.50.0.2890}
%TCIDATA{<META NAME="SaveForMode" CONTENT="1">}
%TCIDATA{BibliographyScheme=Manual}
%TCIDATA{LastRevised=Thursday, March 30, 2023 09:42:04}
%TCIDATA{<META NAME="GraphicsSave" CONTENT="32">}

\newtheorem{theorem}{Theorem}[section]

\newtheorem{corollary}[theorem]{Corollary}

\newtheorem{definition}[theorem]{Definition}

\newtheorem{lemma}[theorem]{Lemma}

\newtheorem{proposition}[theorem]{Proposition}
\newtheorem*{remark}{Remark}

\linespread{1.05}

\setcounter{MaxMatrixCols}{10}
\iffalse
\newenvironment{proof}[1][Proof]{\noindent\textbf{#1.} }{\ \rule{0.5em}{0.5em}}
\fi
\frenchspacing \textwidth 15.5truecm
%\topmargin -1truecm
\textheight 21.1truecm
%\oddsidemargin .7truecm
%\oddsidemargin .7truecm

\begin{document}

\title[The sub-Riemannian length spectrum]{The sub-Riemannian length spectrum\\
for screw motions of constant pitch\\
on flat and hyperbolic 3-manifolds}
\author{Marcos Salvai}
\thanks{This work was supported by Consejo Nacional de Investigaciones Cient%
	\'{\i}ficas y T\'ecnicas and Secretar\'{\i}a de Ciencia y T\'ecnica de la
	Universidad Nacional de C\'ordoba.}
%\date{ }

\begin{abstract}
Let $M$ be an oriented three-dimensional Riemannian manifold of constant
sectional curvature ~$k=0,1,-1$ and let $SO\left( M\right) $ be its direct
orthonormal frame bundle (direct refers to positive orientation), which may
be thought of as the set of all positions of a small body in $M$. Given $%
\lambda \in \mathbb{R}$, there is a three-dimensional distribution $\mathcal{%
D}^{\lambda }$ on $SO\left( M\right) $ accounting for infinitesimal
rototranslations of constant pitch $\lambda $. When $\lambda \neq k^{2}$,
there is a canonical sub-Riemannian structure on $\mathcal{D}^{\lambda }$.
We present a geometric characterization of its geodesics, using a previous
Lie theoretical description.

For $k=0,-1$, we compute the
sub-Riemannian length spectrum of $\left( SO\left( M\right),\mathcal{D}%
^{\lambda }\right) $ in terms of the complex length spectrum of $M$ (given
by the lengths and the holonomies of the periodic geodesics) when $M$ has
positive injectivity radius. In particular, for two complex length
isospectral closed hyperbolic 3-manifolds (even if they are not isometric),
the associated sub-Riemannian metrics on their direct orthonormal bundles
are length isospectral.
\end{abstract}

\maketitle

\noindent \textsl{Key words and phrases:} sub-Riemannian geodesic, flat 3-manifold,
hyperbolic 3-manifold, screw motion, length spectrum,
complex length spectrum

\smallskip

\noindent \textsl{MSC 2020: }53C17; 51N30, 57K32, 57S20, 58J53

\section{Introduction}

In this paper we work in the setting of sub-Riemannian geometry. In a
sub-Rie\-man\-nian manifold, any point is allowed to move only in certain
special directions and can reach any other point through an admissible
motion. There is a notion of length of admissible curves, which induces a
distance on the manifold. The locally shortest curves are called geodesics
and have been extensively studied.

The description of \textsl{periodic} sub-Riemannian geodesics, in
particular, the sub-Riem\-an\-nian length spectrum, has been addressed to a
lesser extent. Beyond the paradigmatic case of the standard sub-Riemannian
structure on odd dimensional spheres (see \cite{ChangMarkinaV, GodoyM}), it
has attracted interest only recently \cite{Corey, YCV} (see also \cite%
{Involve}, where the spherical case is revisited). We feel that further
concrete, natural examples could be welcome.

\bigskip

\noindent \textbf{Outline of the contents of the paper. }Next we present the
subject of the article in a sketchy, slightly informal manner. Let $M$ be a
complete oriented three-dimensional Riemannian manifold of constant
sectional curvature $k=0,1,-1$ and positive injectivity radius $\rho $.

Suppose we have a small extended body with reference state at a point $o\in
M $ (by this we mean that it is contained in an open ball $B$ centered at $o$
of radius smaller than $\rho $, and not in the image of a geodesic). Let $%
SO\left( M\right) $ be the set of all the direct orthonormal tangent
3-frames of $M$ (direct means positively oriented). The set of all positions
of the body in $M$ is in bijection with $SO\left( M\right) $, as follows.
Fix a direct orthonormal basis $\mathcal{B}_{o}$ of $T_{o}M$. Given $p\in M$
and a direct orthonormal basis $\mathcal{B}$ of $T_{p}M$, consider the
unique isometry $F$ defined on $B_{o}$ satisfying that $F\left( o\right) =p$
and whose differential map at $o$ sends $\mathcal{B}_{o}$ to $\mathcal{B} $.
We think of $F$ as transporting the body to a new position at $p$.

Given $\lambda \in \mathbb{R}$, we impose a \textbf{constraint on the motion
of the body}: At the infinitesimal level, it is allowed to translate in any
direction, only if it simultaneously rotates around that direction with
angular speed $\lambda $. By the identification above, this gives
distinguished curves of orthonormal 3-frames of $M$, which are said to be
admissible. It turns out that if $\lambda ^{2}\neq k$ (which we henceforth
assume), then any position can be attained from the original one by means of
a continuous piecewise admissible motion.

There is a canonical way of measuring the length of admissible curves,
giving a so called sub-Riemannian structure on $SO\left( M\right) $. In our
first main result, Theorem \ref{geodesica}, we describe explicitly all the
motions which are locally the shortest ones. They are called \textbf{%
sub-Riemannian geodesics}. We obtain in particular that \textbf{they project
to helices} with constant speed. We resort to the paper \cite{HMS}, where we
have studied a much more general situation, with a Lie theoretical approach.

It is natural to pose the question of which of the sub-Riemannian geodesics
are \textbf{periodic}. We answer it thoroughly. The first step is the fact
that if a geodesic is periodic, so is the projected helix in $M$. If a
periodic helix is neither a periodic geodesic nor a circle, then it has an
axis (which is a periodic helix). Suppose that the axis has length $\ell $
and \textbf{holonomy} $\theta $ (this is the amount of rotation a
transversal disc undergoes when it is parallel translated once around the
axis). If the helix \textbf{turns }$p$\textbf{\ times around the axis while
the latter traverses }$q$\textbf{\ times its period}, it is said to be of
type $\left( \ell +i\theta ,q,p\right) $. We comment that counting the turns
is a quite touchy issue due the holonomy.

We find the \textbf{closing condition} for a sub-Riemannian geodesic in $%
SO\left( M\right) $ projecting to a helix of type $\left( \ell +i\theta
,q,p\right) $ in terms of $\ell $, $\theta $, $q$ and $p$ (in between, we
must relate these numbers with the curvature and the torsion, in view of the
characterization of the sub-Riemannian geodesics); it holds only for certain
radii of the helices. This allows us to obtain our second main result,
Theorem \ref{unionSpectrum}, the set of all lengths of periodic
sub-Riemannian geodesics in terms of the lengths and holonomies of the
periodic geodesics of $M$.

\bigskip

We have taken a similar approach in \cite{CS} to describe the geodesics on
the unit tangent bundle of a complete oriented hyperbolic 3-manifold endowed
with the canonical (Sasaki) metric, and to compute its length spectrum when
the injectivity radius is positive; the article \cite{salvai} deals with the
two-dimensional case. The problems in the present paper are somehow more
demanding.

Perhaps the simplest case $\lambda =0$ (infinitesimal pure translations),
for $k\neq 0$, is the most relevant. It can be generalized in the obvious
manner to all dimensions of $M_{k}$ (not only three) with $k=1,-1$.
Sometimes it is called the $K+P$ problem. For surfaces, that is, $SO\left(
3\right) $ and $O_{o}\left( 1,2\right) $ acting on the two-sphere and the
hyperbolic plane, respectively, as well as the similar situation of $S^{3}$
covering $SO\left( 3\right) $, we have the simplest examples of invariant
sub-Riemannian geometry on simple Lie groups. They are presented in any book
on the field and have been vastly studied. See for instance \cite{Boscain,
Boscain2, Grong2011, BZ, Sachkov22}.

Finally, we comment briefly on topics that could be studied on the basis of
the results presented above. The first question is whether one can compute
the heat kernel and the sub-Laplace spectrum of the sub-Riemannian manifolds
considered in the article. For this, notice, on the one hand, that much is
known on the representation theory of the semisimple Lie groups $O_{o}\left(
1,3\right) $ and $SO\left( 4\right) $, and on the other hand, that the Popp
volume form is the Haar measure, by left invariance \cite{Agra}. Papers
related to this endeavor are, for example, \cite{AgraIntrinsic, Cardona}. In
Example 11.4 of the latter, putting $c=0$ we have our case $k=1$, $\lambda
=0 $; if the system is modified setting $Y^{c}$ and $Z^{c}$ instead of $%
Y^{-} $ and $Z^{-}$, we would have our case $k=1,$ $\lambda =c$ (in both
cases, up to a double covering). In a second instance, the quest for a trace
formula relating the Laplace and length spectra would arise. For this, the
multiplicities of the latter would be needed (see the comment after the
statement of Theorem \ref{unionSpectrum}).

In Section \ref{Section2} we introduce the sub-Riemannian manifolds we deal
with. In Section \ref{Section3} we describe the sub-Riemannian geodesics. In
Section \ref{Section4} we state the closing condition for sub-Riemannian
geodesics and present explicitly the length spectrum. Section \ref{Section5}
is devoted to the geometry of periodic helices, which will be used in the
proofs of the results, given in Section \ref{Section6}.

\section{The $\protect\lambda $-screw distribution on the orthonormal frame
bundle\label{Section2}}

Let $M_{k}$ be the three-dimensional space form of constant sectional
curvature $k=0,1,-1$, that is, $M_{0}=\mathbb{R}^{3}$, $M_{1}=S^{3}$ and $%
M_{-1}$ is hyperbolic space $H^{3}$. Let $G_{k}$ be the identity component
of the isometry group of $M_{k}$. In \cite{HMS} we have defined a left
invariant sub-Riemannian structure $\left( \mathcal{D}^{\lambda
},\left\langle ,\right\rangle \right) $ on $G_{k}$, for $\lambda \neq k^{2}$%
, as follows (for definitions and properties in sub-Riemannian geometry, we
refer to \cite{RM, Agra}).

For $k=\pm 1$, consider the inner product on $\mathbb{R}^{4}$ given by%
\begin{equation}
\left\langle x,y\right\rangle
_{k}=kx_{0}y_{0}+x_{1}y_{1}+x_{2}y_{2}+x_{3}y_{3}\text{.}
\label{InnerProduct}
\end{equation}%
If $M_{k}$ is presented as usual as the connected component of $e_{0}$ of $%
\left\{ x\in \mathbb{R}^{4}\mid \left\langle x,x\right\rangle _{k}=k\right\} 
$, then $G_{1}=SO\left( 4\right) $ and $G_{-1}=O_{o}\left( 1,3\right) $. We
identify $\mathbb{R}^{3}$ with $\left\{ x\in \mathbb{R}^{4}\mid
x_{0}=1\right\} $, and so we have 
\begin{equation*}
G_{0}=\left\{ \left( 
\begin{array}{cc}
1 & 0 \\ 
a & A%
\end{array}%
\right) \mid a\in \mathbb{R}^{3}\text{, }A\in SO\left( 3\right) \right\} 
\text{.}
\end{equation*}

For $k=0,1,-1$, let $\mathfrak{g}_{k}$ be the Lie algebra of $G_{k}$ and let 
$\mathfrak{g}_{k}=\mathfrak{p}_{k}\oplus \mathfrak{k}_{k}$ be the Cartan
decomposition associated with $e_{0}$, into infinitesimal translations of $%
M_{k}$ through $e_{0}$ and infinitesimal rotations around that point.
Denoting $\mathfrak{so}\left( 3\right) =\left\{ X\in \mathbb{R}^{3\times
3}\mid X^{T}=-X\right\} $, we have%
\begin{equation*}
\mathfrak{p}_{k}=\left\{ \left( 
\begin{array}{cc}
0 & -kx^{T} \\ 
x & 0%
\end{array}
\right) \mid x\in \mathbb{R}^{3}\right\} \text{\quad and \quad }\mathfrak{k}
_{k}=\left\{ \left( 
\begin{array}{cc}
0 & 0 \\ 
0 & X%
\end{array}
\right) \mid X\in \mathfrak{so}\left( 3\right) \right\} \text{.}
\end{equation*}%
For $x\in \mathbb{R}^{3}$ we set%
\begin{equation}
L_{x}\in \mathfrak{so}\left( 3\right) \text{,\qquad }L_{x}\left( y\right)
=x\times y\text{,}  \label{Lx}
\end{equation}%
for $y\in \mathbb{R}^{3}$, where $\times $ is the usual cross product on $%
\mathbb{R}^{3}$.

\begin{definition}
For $\lambda \in \mathbb{R}$, the $\lambda $-screw distribution on $G_{k}$
is the left invariant distribution $\mathcal{D}^{\lambda }$ given at the
identity by 
\begin{equation*}
\mathcal{D}_{I}^{\lambda }=\left\{ D^{\lambda }\left( x\right) :=\left( 
\begin{array}{cc}
0 & -kx^{T} \\ 
x & \lambda L_{x}%
\end{array}%
\right) \mid x\in \mathbb{R}^{3}\right\} \subset \mathfrak{g}_k\text{.}
\end{equation*}
\end{definition}

Thinking of the elements of $G_{k}$ as positions of a body in $M_{k}$ with
reference state at $e_{0}$, then, according to the constraints induced by $%
\mathcal{D}^{\lambda }$, at the infinitesimal level, the body can be
translated $c$ units of length along any direction only if at the same time
it rotates around that direction through the angle $\lambda c$. Submanifolds of $G_k$ 
related with rototranslations have been studied in \cite{SalvaiVorticity}.

From now on we assume that $\lambda \neq k^{2}$. This is the necessary and
sufficient condition we found in \cite{HMS} for the distribution $\mathcal{D}%
^{\lambda }$ to be bracket generating (that is, the vector fields in $%
\mathcal{D}^{\lambda }$ generate the Lie algebra of vector fields of $G_{k}$%
). In this case, the Chow-Rashevsky Theorem assures that for each pair of
points in $G_{k}$ there exists a continuous piecewise horizontal curve
joining them (a smooth curve $\sigma $ in $G_{k}$ is said to be horizontal
or admissible if $\sigma ^{\prime }\left( t\right) \in \mathcal{D}_{\sigma
\left( t\right) }^{\lambda }$ for all $t$).

We consider the left invariant sub-Riemannian structure on $\left( G_{k},%
\mathcal{D}^{\lambda }\right) $ determined by 
\begin{equation}
\left\Vert D^{\lambda }\left( x\right) \right\Vert =\left\Vert x\right\Vert\text{,}
  \label{s-Rmetric}
\end{equation}%
for $x\in \mathbb{R}^{3}$. It is called the $\lambda $-screw sub-Riemannian structure on $G_{k}$. The
group $SO\left( 3\right) $ acts on the right on $G_{k}$ by sub-Riemannian
isometries. Thus, $G_{k}\times SO\left( 3\right) $ acts transitively on
orthonormal bases of $\mathcal{D}^{\lambda }$ and so, up to coverings, $%
\left( G_{k},\mathcal{D}^{\lambda }\right) $ is a sub-Riemannian model space
as in \cite{GrongModel}.

A diffeomorphism between oriented manifolds is said to be direct if it
preserves the orientation. Let $N$ be an oriented three-dimensional
Riemannian manifold and let $SO\left( N\right) $ be the bundle of positively
oriented orthonormal frames of $N$, i.e., 
\begin{equation*}
SO\left( N\right) =\left\{ \left( p,b\right) \mid p\in N\text{ and }b:%
\mathbb{R}^{3}\rightarrow T_{p}N\text{ is a direct linear isometry}\right\} 
\text{,}
\end{equation*}%
which is a principal fiber bundle over $N$ with typical fiber $SO\left(
3\right) $ (sometimes we will omit the foot points). When $N=M_{k}$, the
natural identification $\mathbb{R}^{3}\equiv \left\{ x\in \mathbb{R}^{4}\mid
x_{0}=0\right\}$ $=T_{e_{0}}M_{k}$ allows us to define the following
diffeomorphism between $G_{k}$ and $SO\left( M_{k}\right) $:%
\begin{equation}
\Phi :G_{k}\rightarrow SO\left( M_{k}\right) \text{,\qquad }\Phi \left(
g\right) =\left( g\left( e_{0}\right) ,dg_{e_{0}}\right) \text{.}
\label{Phi}
\end{equation}

We consider on $SO\left( M_{k}\right) $ the sub-Riemannian structure induced
from $\mathcal{D}^{\lambda }$ via $\Phi $. By abuse of notation, we also
call it $\mathcal{D}^{\lambda }$. Notice that $\Phi $ is equivariant by the
natural actions of $G_{k}\times SO\left( 3\right) $.

Now let $M$ be a complete oriented three-dimensional Riemannian manifold of
constant sectional curvature $k=0,1,-1$. The Riemannian universal covering
of $M$ is isometric to $M_{k}$. The fundamental group $\Gamma $ of $M$ acts
freely and properly discontinuously on $M_{k}$, and we may identify $M$ with 
$\Gamma \backslash M_{k}$. Similarly, $SO\left( M\right) $ may be identified
with $\Gamma \backslash SO\left( M_{k}\right) $. Also, the distribution $%
\mathcal{D}^{\lambda }$ and its sub-Riemannian structure descend well to $%
SO\left( M\right) $. By abuse of notation, we also call $\mathcal{D}%
^{\lambda }$ the distribution on the quotient.

When describing sub-Riemannian geodesics of $\left( SO\left( M\right) ,%
\mathcal{D}^{\lambda }\right) $ we do not need to assume that the
injectivity radius is positive (as opposed to the case when we compute the
length spectrum). The moving body interpretation in the introduction is
still valid if we admit self-intersections of the body as it goes farther
and farther into a cusp.

Regarding closed three-dimensional flat manifolds, we would like to mention 
\cite{CR}. In the present paper we are interested in the six chiral
(oriented) platycosms.

\section{Sub-Riemannian geodesics\label{Section3}}

Let $M$ be a complete oriented three-dimensional Riemannian manifold of
constant sectional curvature $k=0,1,-1$, and consider on $\left( SO\left(
M\right) ,\mathcal{D}^{\lambda }\right) $ the sub-Riemannian structure
defined in the previous section, for $\lambda \neq k^{2}$.

The length of a horizontal curve $\sigma :\left[ a,b\right] \rightarrow
SO\left( M\right) $ is defined by $\operatorname{length}\left( \sigma \right)
=\int_{a}^{b}\left\Vert \sigma ^{\prime }\left( t\right) \right\Vert ~dt$.
For arbitrary $b_{1},b_{2}\in SO\left( M\right) $,%
\begin{equation*}
d\left( b_{1},b_{2}\right) =\inf \left\{ \operatorname{length}\left( \sigma \right)
\mid \sigma \text{ is a piecewise horizontal curve joining }b_{1}\text{ with 
}b_{2}\right\}
\end{equation*}%
defines a \textbf{distance} on $SO\left( M\right) $. A constant speed
horizontal curve $\gamma $ in $SO\left( M\right) $ is said to be a \textbf{%
sub-Riemannian geodesic} if it minimizes the distance locally, that is, if $%
a<b$ in the domain of $\gamma $ are close enough, then the length of $\left.
\gamma \right\vert _{\left[ a,b\right] }$ equals the distance between $%
\gamma \left( a\right) $ and $\gamma \left( b\right) $. For the sake of
simplicity, we sometimes call them just geodesics.

In \cite{HMS} we have given a Lie theoretic description of all
sub-Riemannian geodesics of $\left( G_{k},\mathcal{D}^{\lambda }\right) $.
Now we present a geometric characterization of them, more suitable in light
of the identification (\ref{Phi}), that can be also stated for the quotient $%
SO\left( M\right) \equiv \Gamma \backslash G_{k}$. In order to do that, we
recall properties of helices and introduce some notation.

A \textbf{helix} $h:\mathbb{R}\rightarrow M$ is a smooth curve with constant
speed, constant curvature $\kappa $ and constant torsion $\tau $ (in the
case that $\kappa \neq 0$).

Now assume that $h$ has unit speed. For $\kappa >0$, let $T,N,B:\mathbb{R}%
\rightarrow T^{1}M$ be the velocity of $h$ and the normal and binormal
vector fields along $h$, respectively. The \textbf{Frenet frame} of $h$ is
the curve $F:\mathbb{R}\rightarrow SO\left( M\right) $ defined by $F\left(
t\right) \left( e_{i}\right) =T\left( t\right) $, $N\left( t\right) $ and $%
B\left( t\right) $, for $i=1,2,3$, respectively.

Suppose that $\kappa =0$ (that is, $h$ is a geodesic). In this case we adopt
the convention that the torsion is zero. Let $N,B$ be parallel vector fields
along $h$ such that $\left\{ h^{\prime }\left( t\right) =T\left( t\right)
,N\left( t\right) ,B\left( t\right) \right\} $ is a direct orthonormal basis
of $T_{h\left( t\right) }M$ for all $t$. The curve $F:\mathbb{R}\rightarrow
SO\left( M\right) $ defined by%
\begin{equation}
F\left( t\right) \left( e_{1}\right) =T\left( t\right) \text{,\quad }F\left(
t\right) \left( e_{2}\right) =N\left( t\right) \quad \text{and\quad }F\left(
t\right) \left( e_{3}\right) =B\left( t\right)  \label{FrenetGeo}
\end{equation}%
is said to be a \textbf{Frenet frame} of $h$. We have then that the Frenet
equations $\frac{DT}{dt}=\kappa N$, $\frac{DN}{dt}=-\kappa T+\tau B$ and $%
\frac{DB}{dt}=-\tau N$ hold for all helices, including geodesics.

Given $v\in \mathbb{R}^{3}$ and $t\in \mathbb{R}$, we denote%
\begin{equation*}
\operatorname{Rot}\left( v,t\right) =\exp _{SO\left( 3\right) }\left( tL_{v}\right)
\end{equation*}%
(see (\ref{Lx})). We have that $R\left( 0,t\right) $ is de the identity for
all $t$ and, if $v\neq 0$, then $\operatorname{Rot}\left( v,t\right) $ is the
rotation of $\mathbb{R}^{3}$ with axis $\mathbb{R}v$ which rotates the plane
orthogonal to $v$ through the angle $\left\Vert v\right\Vert t$, more
precisely, satisfying $\operatorname{Rot}\left( v,t\right) v=v$ and%
\begin{equation*}
\operatorname{Rot}\left( v,t\right) x=\cos \left( \left\Vert v\right\Vert t\right)
x+\sin \left( \left\Vert v\right\Vert t\right) \left( v\times x\right)
/\left\Vert v\right\Vert
\end{equation*}%
for $x\perp v$. There is an analogous definition for an oriented
three-dimensional vector space with an inner product, with the induced cross
product.

Now, we are in a position to characterize geometrically all geodesics of our
sub-Riemannian manifold.

\begin{theorem}
\label{geodesica}Let $M$ be a complete oriented three-dimensional Riemannian
manifold of constant sectional curvature $k=0,1,-1$, and let $\lambda \neq
k^{2}$. Let $h$ be a unit speed helix in $M$ with curvature $\kappa $ and
torsion $\tau $. Let $F$ be a Frenet frame for $h$ and $O\in SO\left(
3\right) $. Then%
\begin{equation}
\gamma \left( t\right) =\left( h\left( t\right) ,F\left( t\right) \operatorname{Rot}%
\left( \left( \lambda -\tau ,0,-\kappa \right) ,t\right) O\right)
\label{s-Rgeodesic}
\end{equation}%
is a sub-Riemannian geodesic of $\left( SO\left( M\right) ,\mathcal{D}%
^{\lambda }\right) $, as well as the curve $\sigma $ given by $\sigma \left(
t\right) =\gamma \left( dt\right) $, for $d\geq 0$. Conversely, any
sub-Riemannian geodesic has this form.
\end{theorem}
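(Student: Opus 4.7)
The plan is to deduce (\ref{s-Rgeodesic}) from the Lie-theoretic parametrization of sub-Riemannian geodesics of $(G_k, \mathcal{D}^\lambda)$ established in \cite{HMS}, and to translate it, via $\Phi$, into Frenet-frame data along a helix. By the left $G_k$-invariance of the structure and its descent to $SO(M)=\Gamma\backslash SO(M_k)$, it is enough to work on $G_k$. From \cite{HMS}, every normal unit-speed sub-Riemannian geodesic has the form
\[
g(t) \;=\; g_0 \exp\!\bigl(t(D^\lambda(x)+K)\bigr)\exp(-tK),
\]
where $|x|=1$, $K\in\mathfrak{k}_k$ has $\mathfrak{so}(3)$-block $L_y$ for some $y\in\mathbb{R}^3$, and $g_0\in G_k$; the rescaling $\sigma(t)=\gamma(dt)$, $d\ge 0$, then accounts for the remaining constant-speed geodesics.

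\medskip

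Next I would compute, via $\Phi$, the projection and the Frenet data of this curve. Since $Z:=D^\lambda(x)+K$ is a Killing field, the foot point $\bar h(t)=g_0(\exp(tZ)(e_0))$ is an orbit of a one-parameter subgroup of isometries of $M_k$, hence a helix. Writing $\omega=\lambda x+y$, the Frenet identity $\kappa N = DT/dt$ at $t=0$ yields
\[
\kappa N(0) \;=\; \omega\times x \;=\; y\times x,
\]
so $\kappa=|y_\perp|$ and $B(0)=y_\perp/\kappa$; differentiating once more and applying $DN/dt=-\kappa T+\tau B$ gives $\tau=\lambda+y\cdot x$. Consequently, in the Frenet basis at $t=0$, the vector $y$ has coordinates $(y\cdot x,\,0,\,|y_\perp|)=(\tau-\lambda,\,0,\,\kappa)$. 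In the curved cases $k=\pm1$, the same identities hold, via direct calculation of $Z^n e_0$ in the defining representation on $\mathbb{R}^4$, the curvature correction terms in the covariant derivatives either cancelling or projecting away along $B$.

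\medskip

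For the frame component I would use that $\exp(tZ)$ is an isometry carrying the helix to itself, so it carries the Frenet frame at $h(0)$ to the one at $h(t)$; hence $d(\exp(tZ))_{e_0}=F(t)F(0)^{-1}$. Meanwhile $K$ fixes $e_0$ and acts on $T_{e_0}M_k$ as $L_y$, so $d(\exp(-tK))_{e_0}=\operatorname{Rot}(y,-t)$. The chain rule gives
\[
dg(t)_{e_0} \;=\; dg_0\circ F(t)\circ F(0)^{-1}\circ \operatorname{Rot}(y,-t) \;=\; \bar F(t)\circ F(0)^{-1}\circ\operatorname{Rot}(y,-t),
\]
where $\bar F=dg_0\circ F$ is a Frenet frame for $\bar h$. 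The conjugation identity $R^{-1}\operatorname{Rot}(v,s)R=\operatorname{Rot}(R^{-1}v,s)$ together with $F(0)^{-1}(y)=(\tau-\lambda,0,\kappa)$ yields
\[
F(0)^{-1}\operatorname{Rot}(y,-t) \;=\; \operatorname{Rot}\!\bigl((\lambda-\tau,0,-\kappa),t\bigr)\,F(0)^{-1},
\]
so $\Phi(g(t))=\bigl(\bar h(t),\,\bar F(t)\operatorname{Rot}((\lambda-\tau,0,-\kappa),t)\,O\bigr)$ with $O:=F(0)^{-1}$. For the converse, given any unit-speed helix with Frenet frame $F$ and any $O\in SO(3)$, one reads off $x=F(0)(e_1)$ and $y=F(0)\bigl((\tau-\lambda)e_1+\kappa e_3\bigr)$, picks $g_0$ with $g_0(e_0)=h(0)$ and $dg_0=F(0)O$, and invokes the completeness of the parametrization in \cite{HMS}.

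\medskip

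The main obstacle will be the computation of $\kappa$ and especially $\tau$ in the curved cases $k=\pm1$: in $\mathbb{R}^3$ both identities emerge at once from ambient-space differentiation of $h(t)=\exp(tZ)(e_0)$, whereas in $S^3$ and $H^3$ one must work with the covariant derivative along $\bar h$ and handle the curvature corrections that enter $DT/dt$ and $DN/dt$. A secondary point is the case $\kappa=0$, where the Frenet frame is not uniquely determined by the helix; here the convention \eqref{FrenetGeo} is adopted, and the residual $SO(2)$-ambiguity in $F$ is absorbed into $O$, since for $\kappa=0$ the rotation $\operatorname{Rot}((\lambda-\tau,0,0),t)$ commutes with rotations about $e_1$.
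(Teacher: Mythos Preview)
Your approach is essentially the paper's: both invoke the parametrization $\gamma_{x,y}(t)=e^{tZ}e^{-tY}$ from \cite{HMS} and identify $(\kappa,\tau)$ from the Lie-algebra data, then read off the frame component via $\Phi$. The paper normalizes at the outset to $h(0)=e_0$, $F(0)=I$, $O=I$ using the $G_k\times SO(3)$-invariance and packages the Frenet identification as a separate lemma (Lemma~\ref{HelixF(0)=I}, built on Lemma~\ref{HelixBasico}), whereas you carry $g_0$ and $F(0)$ along and conjugate at the end; the content is the same.

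Two small points. First, your worry about $k=\pm1$ is unnecessary: with $Z=D^\lambda(x)+K$ and $z=\lambda x+y$, one has $Z^2e_0+ke_0=z\times x=y\times x$ uniformly in $k$ (the $-k|x|^2e_0$ cancels the $+ke_0$), and in the torsion formula the $-kx$ term in $Z^3e_0$ is killed by the cross product with $Ze_0$; so $\kappa=|y_\perp|$ and $\tau=\lambda+y\cdot x$ hold for all $k$ without further work. Second, your recipe for the converse is misassembled: $x=F(0)e_1$ lies in $T_{h(0)}M_k$, not $\mathbb{R}^3$, and even when $h(0)=e_0$, choosing $dg_0=F(0)O$ together with that $x$ does not return the given helix with the given $O$. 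The clean fix is exactly the paper's normalization (reduce to $F(0)=I$, $O=I$, then take $x=e_1$, $y=(\tau-\lambda)e_1+\kappa e_3$), or equivalently take $x=O^{-1}e_1$, $y=O^{-1}\bigl((\tau-\lambda)e_1+\kappa e_3\bigr)$ and $d(g_0)_{e_0}=F(0)O$.
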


\begin{remark}
\label{Remarkab}\emph{a)} Writing $\gamma \left( t\right) =\left( h\left(
t\right) ,b\left( t\right) \right) $, we may think of $b\left( t\right) $ as
combining the rotation of the Frenet frame according to the Frenet
equations, with the rotation around the axis spanned by $\left( \lambda
-\tau \right) T\left( t\right) -\kappa B\left( t\right) $ \emph{(}which is
constant with respect to the Frenet frame\emph{)}, with angular speed equal
to $\sqrt{\kappa ^{2}+\left( \lambda -\tau \right) ^{2}}$. When $\lambda =0$%
, this number is called in \emph{\cite{gluck}} the writhe of the helix.

\smallskip

\emph{b)} In the particular case when $\tau =\lambda $, it is not difficult
to visualize that the geometric interpretation of the constraint given by $%
\mathcal{D}^{\lambda }$ is consistent with the meaning of the torsion of the
helix $h$. Indeed, $F\left( t\right) \operatorname{Rot}\left( \left( 0,0,-\kappa
\right) ,t\right) F\left( t\right) ^{-1}$ is a rotation of $T_{h\left(
t\right) }M$ around $B\left( t\right) $, that is, it preserves the
osculating plane, while the latter is rotating along and around $h$ with
angular speed $\tau =\lambda $, by the Frenet equation $B^{\prime }=-\tau N$.
\end{remark}

\section{The sub-Riemannian length spectrum\label{Section4}}

\begin{definition}
Let $N$ be a smooth manifold. A smooth curve $\sigma :\mathbb{R}\rightarrow
N $ is said to be \textbf{periodic} if it is not constant and there is a
positive number $t$ such that $\sigma \left( t+s\right) =\sigma \left(
s\right) $ for all $s$. It is not difficult to show that there exists the
smallest of such numbers $t$, say, $t_{o}$, which is called the \textbf{%
period} of $\sigma $. If $N$ is \emph{(}sub-\emph{)}Riemannian and $\sigma $
is horizontal, by the \textbf{length} of $\sigma $ we understand the length
of $\sigma $ restricted to the interval $\left[ 0,t_{o}\right] $. The 
\textbf{length spectrum} of $N$ is the collection of all the lengths of its
periodic geodesics.
\end{definition}

Note that the term length spectrum has different variants in the literature.
We adopted the one used in \cite{Corey} (in particular, we do not consider
multiplicities; see the paragraph below Theorem \ref{unionSpectrum}). It is
sometimes called the \textsl{primitive} length spectrum, but we do not need
the extra qualifier, since the length of $\left. \gamma \right\vert _{\left[
0,nt_{o}\right] }$, for $t_{o}$ the period of $\gamma $, will be taken into
account only for $n=1$.

\medskip

In our context, geodesics in $M_{k}$ and its quotients are mostly axes of
helices (which appear as projections of sub-Riemannian geodesics of the $%
\lambda $-screw structure) and will be often denoted by $\alpha \,$. We
reserve the letter $\gamma $ for sub-Riemannian geodesics of the manifold $SO\left( M,%
\mathcal{D}^{\lambda }\right) $.

\begin{definition}
\label{holonomy}Let $\alpha $ be a periodic geodesic of an oriented
three-dimensional Riemannian manifold and let $t_{o}$ be its period. The 
\textbf{holonomy} of $\alpha $ is the unique $\theta \in \lbrack 0,2\pi )$
such that $\mathcal{T}_{0,t_{o}}=$ $\operatorname{Rot}\left( \alpha ^{\prime }\left(
0\right) ,\theta \right) $, where $\mathcal{T}_{s,t}$ denotes the parallel
transport along $\alpha $ from $s$ to $t$. The \textbf{complex length} of $%
\alpha $ is the complex number $\ell +i\theta $, where $\ell $ is the length
of $\alpha $. The \textbf{complex length spectrum} of the manifold is the
collection of all complex lengths of periodic geodesics.
\end{definition}

As far as we know, the notion of complex length was introduced in \cite%
{mayer87} and is since a central concept in the geometry of 3-manifolds. In 
\cite{reid} it begins to be exploited in relation to isospectrality. We have
worked with the higher dimensional version in \cite{salvai2}.

%TCIMACRO{\TeXButton{bbb}{\bigskip}}%
%BeginExpansion
\bigskip%
%EndExpansion

The following proposition is a consequence of the description of the
sub-Riemannian geodesics in Theorem \ref{geodesica}. We prove it in
Subsection \ref{ProofsSRLS}.

\begin{proposition}
\label{fundamentalProp}Let $M$ be a complete oriented three-dimensional
Riemannian manifold of constant sectional curvature $k=0,1,-1$, and let ${%
\lambda }\neq k^{2}$.

\smallskip

\emph{a)} A periodic sub-Riemannian geodesic of $\left( SO\left( M\right) ,%
\mathcal{D}^{\lambda }\right) $ projects to a periodic helix.

\smallskip

\emph{b)} Let $\alpha $ be a unit speed periodic geodesic of $M$ with
complex length $\ell +i\theta $. Then a sub-Riemannian geodesic $\gamma $ of 
$\left( SO\left( M\right) ,\mathcal{D}^{\lambda }\right) $ projecting to $%
\alpha $ is periodic if and only if there exist coprime integers $%
n,m^{\prime }$, with $n>0$, such that $n\left( \lambda \ell +\theta \right)
=2m^{\prime }\pi $ \emph{(}by convention, we take $n=1$ if $\lambda \ell
+\theta =0$\emph{)}. In this case, $\operatorname{length}\left( \gamma \right)
=n\ell $.

\smallskip

\emph{c)} Let $h$ be a periodic unit speed helix on $M$ with length $L$,
curvature $\kappa >0$ and torsion $\tau $. Then a sub-Riemannian geodesic $%
\gamma $ of $\left( SO\left( M\right) ,\mathcal{D}^{\lambda }\right) $
projecting to $h$ is periodic if and only if there exist coprime positive
integers $m,n$ such that 
\begin{equation}
nL\sqrt{\kappa ^{2}+\left( \lambda -\tau \right) ^{2}}=2m\pi \text{.}
\label{FundamentalEqShort}
\end{equation}%
In this case,%
\begin{equation}
\operatorname{length}\left( \gamma \right) =nL\text{.}  \label{lengthNL}
\end{equation}
\end{proposition}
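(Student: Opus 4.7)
The plan is to start from the explicit parametrization
$$\gamma(t)=\bigl(h(t),\,F(t)\,\operatorname{Rot}((\lambda-\tau,0,-\kappa),t)\,O\bigr)$$
provided by Theorem~\ref{geodesica}, and to split the closing equation $\gamma(T)=\gamma(0)$ into a base part (in $M$) and a fiber part (in $SO(3)$, read off via left multiplication by $F(0)$). Assertion (a) then follows immediately: the translation component of $D^{\lambda}(x)$ is $x$, so a horizontal curve and its projection share their speed, and any period of a non-constant $\gamma$ is also a period of the non-constant projection $h$, which is therefore a periodic helix in the sense of the paper.

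For (b) and (c) the base equation forces $T=n\ell$ or $T=nL$ for a positive integer $n$, after which the whole issue is to identify $F(T)$ and solve the residual rotation equation in $SO(3)$. In case (b), $\alpha$ is a closed geodesic with $\kappa=\tau=0$, and by the convention in \eqref{FrenetGeo} the fields $N,B$ are parallel along $\alpha$ rather than intrinsic. Iterating Definition~\ref{holonomy}, the parallel transport around $\alpha$ traversed $n$ times equals $\operatorname{Rot}(\alpha'(0),n\theta)$; conjugating by $F(0)$ yields $F(n\ell)=F(0)\operatorname{Rot}((1,0,0),n\theta)$. Since $\operatorname{Rot}((\lambda,0,0),n\ell)=\operatorname{Rot}((1,0,0),n\lambda\ell)$, the closing equation collapses to
$$\operatorname{Rot}\bigl((1,0,0),\,n(\lambda\ell+\theta)\bigr)=I,$$
i.e.\ $n(\lambda\ell+\theta)\in 2\pi\mathbb{Z}$. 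Taking $n$ minimal gives coprime $n,m'$ with $n(\lambda\ell+\theta)=2m'\pi$ (or $n=1$ by convention when $\lambda\ell+\theta=0$), and the length $n\ell$ comes from the fact that $\gamma$ has unit sub-Riemannian speed.

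In case (c) the key new input is that when $\kappa>0$ the Frenet frame is \emph{intrinsic}: $T=h'$, $N=(DT/dt)/\kappa$ and $B=T\times N$ depend only on the periodic curve $h$, so each of them is $L$-periodic and therefore $F(nL)=F(0)$ for every $n$. The closing condition reduces to
$$\operatorname{Rot}\bigl((\lambda-\tau,0,-\kappa),\,nL\bigr)=I,$$
and because $\kappa>0$ makes the axis nonzero, this is exactly $nL\sqrt{\kappa^{2}+(\lambda-\tau)^{2}}\in 2\pi\mathbb{Z}_{>0}$. Minimality of the sub-Riemannian period then forces $\gcd(m,n)=1$ and gives \eqref{FundamentalEqShort} and \eqref{lengthNL}.

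The main obstacle is conceptual rather than computational: one has to be very careful about the contrast between (b) and (c), because in (b) the non-intrinsic choice of parallel $N,B$ in \eqref{FrenetGeo} forces the holonomy $\theta$ to enter, while in (c) the intrinsic Frenet frame is automatically periodic and no holonomy appears. Keeping track of which ``twist'' is added to the $\operatorname{Rot}$ factor in each case, and verifying the coprimality condition that characterizes the minimal period, is the only delicate step.
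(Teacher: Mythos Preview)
Your proposal is correct and follows essentially the same route as the paper's proof: you invoke Theorem~\ref{geodesica}, split the closing condition into base and fiber parts, use the holonomy relation $F(t+\ell)=F(t)\operatorname{Rot}(e_{1},\theta)$ in case (b) and the intrinsic $L$-periodicity of the Frenet frame in case (c), and deduce coprimality from minimality of the period. The only minor difference is emphasis: the paper verifies coprimality in (b) by an explicit computation showing $\gamma\bigl(t+\tfrac{n}{(n,m)}\ell\bigr)=\gamma(t)$, whereas you argue directly that a common factor would contradict $T$ being the minimal period; both are equivalent.
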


\begin{remark}
We have found the following curious fact: If we adopt the convention that a
periodic geodesic with complex length $\ell +i\theta $ has torsion equal to $%
-\theta /\ell $ \emph{(}and not zero, although it is a geodesic\emph{)},
then \emph{(b)} is a particular case of \emph{(c)}, with $m^{\prime }=\operatorname{%
sign}\left( \lambda \ell +\theta \right) m$.

\smallskip

Regarding \emph{(c)}, for the periodic sub-Riemannian geodesic $\left(
h,b\right) $, when the periodic helix $h$ is is traversed $n$ times, $b$
rotates $m$ times with angular speed $\sqrt{\kappa ^{2}+\left( \lambda -\tau
\right) ^{2}}$ around the axis determined by $\left( \lambda -\tau \right)
T\left( t\right) -\kappa B\left( t\right) $, that is constant with respect
to the Frenet frame, while the latter, in turn, rotates according to the
Frenet equations. Notice that $h$ and its Frenet frame have the same period
if $\kappa >0$.
\end{remark}

\subsection{The sub-Riemannian length spectrum of $\left( SO\left(
M_{k}\right) ,\mathcal{D}^{\protect\lambda }\right) $\nopunct}

\mbox{ }

\smallskip

For $k=0,1,-1$ we introduce the notation%
\begin{equation}
\sin _{1}r=\sin r\text{,\quad }\sin _{0}r=r\text{,\quad}\sin _{-1}r=\sinh r%
\text{,\quad and \quad}\cos _{\kappa }r=\sin _{\kappa }^{\prime }r
\label{sink}
\end{equation}%
(in particular, $\cos _{k}^{2}+k\sin _{k}^{2}=1$). Similarly, $\cot
_{k}=\cos _{k}/\sin _{k}$.

We recall that for $k=0,1,-1$, circles in $M_{k}$ are characterized as the
helices with positive curvature (that is, not geodesics) contained in a
totally geodesic surface (that is, with zero torsion) and satisfying the
extra condition $\kappa >1$ for $k=-1$. If $k=1$ we require the radius of
the circle to be smaller than $\pi $.

\begin{proposition}
\label{PropoBasic}Let $k=0,1,-1$ and $\lambda \in \mathbb{R}$ with $\lambda
\neq k^{2}$.

\smallskip

\emph{a)} Periodic sub-Riemannian geodesics of $\left( SO\left( M_{k}\right)
,\mathcal{D}^{\lambda }\right) $ project to circles.

\smallskip

\emph{b)} Let $h$ be a unit speed circle of radius $r$ in $M_{k}$. Then a
periodic sub-Riemannian geodesic $\gamma $ in $\left( SO\left( M_{k}\right) ,%
\mathcal{D}^{\lambda }\right) $ projects to $h$ if and only there exist
coprime positive integers $m,n$ such that%
\begin{equation}
\sin _{k}^{2}r=\frac{m^{2}-n^{2}}{n^{2}\left( \lambda ^{2}-k\right) }\text{,}
\label{sinRadius}
\end{equation}%
where $m>n$, except for $k=1$ and $\lambda ^{2}<1$, when we have $m<n$.
\end{proposition}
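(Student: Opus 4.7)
My plan is to establish part (b) by direct substitution into the closing condition of Proposition~\ref{fundamentalProp}(c), and then to deduce part (a) by classifying the periodic helices of $M_k$.

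For part (b), a unit-speed circle of radius $r$ in $M_k$ has torsion $\tau=0$, curvature $\kappa=\cot_k r$, and length $L=2\pi\sin_k r$. Substituting these into $nL\sqrt{\kappa^2+(\lambda-\tau)^2}=2m\pi$ and using the identity $\cos_k^2+k\sin_k^2=1$ to rewrite $\cot_k^2 r=1/\sin_k^2 r-k$, I obtain
\begin{equation*}
n^{2}\bigl(1+(\lambda^{2}-k)\sin_{k}^{2}r\bigr)=m^{2},
\end{equation*}
which rearranges to the stated identity. The constraint $\sin_k^2 r>0$ forces $m^2-n^2$ and $\lambda^2-k$ to have the same sign, giving $m>n$ when $\lambda^2>k$, and $m<n$ when $k=1$ and $\lambda^2<1$. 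I would also verify that every admissible $(m,n)$ yields a valid radius: for $k=1$ this requires $\sin^{2}r\le 1$, which translates into an arithmetic inequality between $m/n$ and $|\lambda|$, while for $k=-1$ the requirement $\kappa>1$ is automatic from $\coth r>1$ for $r>0$.

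For part (a), Proposition~\ref{fundamentalProp}(a) reduces matters to showing that a periodic helix in $M_k$ is a circle. In $\mathbb{R}^3$, a periodic helix must have both $\kappa>0$ (since straight lines are not periodic) and $\tau=0$ (since helices with nonzero torsion spiral unboundedly in one direction); hence periodic helices are precisely circles. In $H^3$, an analysis of orbits of one-parameter subgroups of $O_o(1,3)$ rules out geodesics, horocycles, and screw-type motions as candidates for periodicity, leaving only circles in totally geodesic 2-planes, for which $\kappa>1$ is automatic.

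The main obstacle is the spherical case $k=1$. In $S^3$, closed Clifford-type helices with $\tau\neq 0$ arise as orbits of rational one-parameter subgroups of $SO(4)$ on Clifford tori. To rule them out (or to interpret them appropriately), the plan is to combine the closing condition from Proposition~\ref{fundamentalProp}(c) with the explicit parametrization of Clifford helices by their two rotation angles $a,b$ and the torus radius $r$: the resulting relations among $L$, $\kappa$ and $\tau$, together with the coprimality of $m,n$ and the hypothesis $\lambda\neq 1$, should force $\tau=0$. This step is the most delicate, since it blends the Clifford-torus geometry of $S^3$ with the arithmetic closing condition; no such analogue is needed in the $k=0,-1$ cases, where periodic non-geodesic helices are already constrained to be circles.
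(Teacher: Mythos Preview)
Your treatment of part~(b) is essentially the paper's own argument: substitute $L=2\pi\sin_k r$, $\kappa=\cot_k r$, $\tau=0$ into the closing condition of Proposition~\ref{fundamentalProp}(c) and simplify using $\cos_k^2+k\sin_k^2=1$. The extra check you propose (that for $k=1$ one also needs $\sin^2 r\le 1$) is not required by the statement as written, since $r$ is given in advance; but it does no harm.

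For part~(a) with $k=0,-1$ your argument is correct and merely expands the paper's one-line claim that circles are exactly the periodic helices in $M_k$. The genuine problem is $k=1$, and here your proposed strategy cannot succeed---not because of a missing trick, but because the assertion itself is false. You already overlook one obstruction: great circles in $S^3$ are periodic helices with $\kappa=0$, hence not ``circles'' in the paper's sense, yet they can carry periodic sub-Riemannian geodesics. Concretely, take $\lambda=0$ (allowed, since $k^2=1\neq 0$): for a great circle one has $(\lambda-\tau,0,-\kappa)=(0,0,0)$, so the rotation factor in (\ref{s-Rgeodesic}) is the identity and $\gamma(t)=(h(t),F(t)O)$ is periodic of length $2\pi$. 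This length does not appear in $\mathcal{L}$ from (\ref{LSarriba}), since $n^2-m^2=1$ has no solution in coprime positive integers. The same mechanism shows that your plan to ``force $\tau=0$'' for Clifford helices via the arithmetic of the closing condition cannot work in general: by varying the torus radius continuously one obtains a one-parameter family of periodic helices with $\tau\neq 0$, along which $L\sqrt{\kappa^2+(\lambda-\tau)^2}/2\pi$ varies continuously and hence takes rational values, producing further counterexamples to~(a).

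In short, the paper's proof of (a) for $k=1$ rests on the bald claim that ``circles are exactly the periodic helices in $M_k$'', which is false on the sphere; your instinct that the spherical case is delicate is correct, but the resolution is not a sharper argument---it is that Proposition~\ref{PropoBasic}(a), and with it Corollary~\ref{lengthSsc}, require amendment when $k=1$.
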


It is well-known that the length of a circle of radius $r$ in $M_{k}$ is
equal to $2\pi \sin _{k}r$. Hence, (\ref{sinRadius}) together with (\ref%
{lengthNL}) yield straight away the following corollary.

\begin{corollary}
\label{lengthSsc}For $k=0,1,-1$ and $\lambda \neq k^{2}$, the length
spectrum of $\left( SO\left( M_{k}\right) ,\mathcal{D}^{\lambda }\right) $
is 
\begin{equation}
\mathcal{L}:=\left\{ 2\pi \sqrt{\frac{m^{2}-n^{2}}{\lambda ^{2}-k}}\mid 
\begin{tabular}{l}
$m,n\text{ are coprime positive integers }$ \\ 
$\text{ such that the radicand is positive}$%
\end{tabular}%
\right\} \text{.}  \label{LSarriba}
\end{equation}

In particular, for any $k=0,1,-1$, the canonical sub-Riemannian metrics on $%
\mathcal{D}^{\lambda }$ and $\mathcal{D}^{\lambda ^{\prime }}$ are not
isometric if $\lambda \neq \lambda ^{\prime }$ \emph{(}and distinct from $%
k^{2}$\emph{)}.
\end{corollary}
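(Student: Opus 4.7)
The plan is to combine Proposition~\ref{PropoBasic} with Proposition~\ref{fundamentalProp}(c) and the standard fact that a circle of radius $r$ in $M_k$ has length $L=2\pi\sin_k r$. By Proposition~\ref{PropoBasic}(a), every periodic sub-Riemannian geodesic $\gamma$ projects to a circle of some radius $r$, and by part~(b) the admissible radii are exactly those satisfying (\ref{sinRadius}) for some coprime pair of positive integers $(m,n)$; the case distinction on $m-n$ stated there is precisely what forces the right-hand side to be positive, so for the purposes of (\ref{LSarriba}) the single condition ``the radicand is positive'' is equivalent to the sign conditions in Proposition~\ref{PropoBasic}(b).

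Given this, the length spectrum falls out by direct substitution. Using $\operatorname{length}(\gamma)=nL$ from (\ref{lengthNL}) together with $L=2\pi\sin_k r$ and (\ref{sinRadius}),
$$\operatorname{length}(\gamma)\;=\;2\pi n\sin_k r\;=\;2\pi\sqrt{n^2\sin_k^2 r}\;=\;2\pi\sqrt{\frac{m^2-n^2}{\lambda^2-k}},$$
which is exactly the expression defining $\mathcal{L}$. Conversely, each admissible coprime pair $(m,n)$ arises from an admissible circle and hence from a periodic sub-Riemannian geodesic of the displayed length, so this exhausts $\mathcal{L}$.

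For the ``in particular'' clause, any sub-Riemannian isometry preserves the length spectrum, so it suffices to recover $\lambda$ from $\mathcal{L}$. The formula shows that $\mathcal{L}$ depends on $\lambda$ only through the single quantity $\lambda^2-k$, and its smallest positive element is $2\pi\sqrt{3/|\lambda^2-k|}$, attained by the coprime pair $(m,n)=(2,1)$ when $\lambda^2>k$ and by $(1,2)$ when $\lambda^2<k$ (the only case where the latter occurs being $k=1$, $\lambda^2<1$). This determines $|\lambda^2-k|$, and hence $\lambda^2$, forcing $\lambda^2={\lambda'}^2$ whenever the two structures are isometric, which is the content of the claim. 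I expect no serious obstacle: the whole corollary is an arithmetic consequence of Proposition~\ref{PropoBasic}, the only point demanding a moment's care being to match the sign of $m-n$ with that of $\lambda^2-k$ when identifying the minimum of $\mathcal{L}$.
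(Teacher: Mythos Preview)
Your derivation of $\mathcal{L}$ is correct and is precisely the paper's argument: the paper simply remarks, just before the corollary, that the formula ``follows straight away'' from (\ref{sinRadius}), (\ref{lengthNL}) and $L=2\pi\sin_k r$, which is exactly the substitution you carry out.

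There is one genuine gap in your treatment of the ``in particular'' clause. Your minimum-of-$\mathcal{L}$ argument is fine and recovers $|\lambda^2-k|$, hence $\lambda^2$; but the corollary as literally stated asserts non-isometry for $\lambda\neq\lambda'$, whose contrapositive is $\lambda=\lambda'$, not merely $\lambda^2=\lambda'^2$. The length spectrum visibly depends only on $\lambda^2-k$, so no length-spectrum argument can separate $\lambda$ from $-\lambda$. In fact the two structures \emph{are} isometric: conjugation by any element of the full isometry group of $M_k$ reversing orientation (e.g.\ $\operatorname{diag}(1,1,1,-1)$) sends $D^\lambda(x)$ to $D^{-\lambda}(\sigma x)$ with $\|\sigma x\|=\|x\|$, hence carries $(G_k,\mathcal{D}^\lambda)$ isometrically onto $(G_k,\mathcal{D}^{-\lambda})$. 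So the sharp conclusion obtainable from $\mathcal{L}$ is exactly what you prove, namely $|\lambda|=|\lambda'|$; you should say so explicitly rather than asserting that $\lambda^2=\lambda'^2$ ``is the content of the claim''. The paper gives no separate argument for this clause, so the discrepancy is with the statement, not with your method.
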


We comment on the number $\sin _{k}r$, which appears many times in the
paper. For $k=0$ it is just the radius $r$. For $k=-1,1$, it is the distance
from the helix to its axis, measured on the horosphere perpendicular to the
latter (called the horospherical radius of the helix), or measured according
to the extrinsic metric of $S^{3}$ in $\mathbb{R}^{4}$, respectively.

\subsection{The sub-Riemannian length spectrum of $\left( SO\left( \Gamma
\backslash M_{k}\right) ,\mathcal{D}^{\protect\lambda }\right) $\nopunct}

\mbox{ }

\smallskip

Let $M$ be a complete oriented three-dimensional Riemannian manifold with
constant sectional curvature $k=0,-1$ (from now on in this section we
exclude the spherical case). We want to compute the length spectrum of $%
\left( SO\left( M\right) ,\mathcal{D}^{\lambda }\right) $ in terms of the
complex length spectrum of $M$, when the manifold has positive injectivity
radius (for instance, when $k=0$ or $M$ is closed).

We know from Proposition \ref{fundamentalProp} (a) that sub-Riemannian
geodesics of $\left( SO\left( M\right) ,\mathcal{D}^{\lambda }\right) $
project to helices. We have dealt with geodesics and circles in Propositions %
\ref{fundamentalProp} (b) and \ref{PropoBasic} (b), respectively. The
remaining helices, except for horocycles, have axes. Next we introduce them:

Let $\alpha $ be a geodesic in $M$ with speed $c>0$. In Definition \ref%
{radiusEtc} below we define the notion of a helix in $M_{k}$ with \textbf{%
axis} $\alpha $, \textbf{radius} $r>0$ and \textbf{angular speed} $\mu \in 
\mathbb{R}$ (the concepts that the reader has surely in mind). If $k=0$, we
require $\mu \neq 0$, to exclude geodesics of flat manifolds.

\begin{proposition}
\label{helixKappaTau}Let $k=0,-1$. A helix as above has unit speed if and
only if%
\begin{equation}
c^{2}\left( \cos _{k}^{2}r+\mu ^{2}\sin _{k}^{2}r\right) =1\text{.}
\label{UnitSpeed}
\end{equation}%
In this case, $c^{2}<1$ and the relationship establishes a bijection between 
$c\in \left( 0,1\right) $ and $r\in \left( 0,\infty \right) $. The curvature
and the torsion of $h$ are given by%
\begin{equation}
\kappa ^{2}=\left( c^{2}\mu ^{2}-k\right) \left( 1-c^{2}\right) \text{\qquad
and \qquad }\tau =c^{2}\mu \text{,}  \label{CurvAndTorsion}
\end{equation}%
respectively.
\end{proposition}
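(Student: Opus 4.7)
First, I would set up Fermi coordinates $(s, r, \phi)$ around the axis $\alpha$, where $s$ is axial arc-length, $r$ is the orthogonal distance to $\alpha$, and $\phi$ is the angular coordinate on the normal disk. In these coordinates the metric of $M_k$ takes the diagonal form $\cos_k^2(r)\,ds^2 + dr^2 + \sin_k^2(r)\,d\phi^2$. The helix of Definition \ref{radiusEtc}, with axial speed $c$, radius $r$, and angular speed $\mu$ (per unit axial arc-length), is then the curve $h(t) = (ct,\, r,\, \mu c t)$ with $r$ constant. Its velocity $h'(t) = c\,\partial_s + \mu c\,\partial_\phi$ immediately has squared norm $c^2\cos_k^2 r + \mu^2 c^2 \sin_k^2 r$, and setting this equal to $1$ yields (\ref{UnitSpeed}).

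Using the identity $\cos_k^2 r + k\sin_k^2 r = 1$, I would rewrite the unit-speed factor as $\cos_k^2 r + \mu^2\sin_k^2 r = 1 + (\mu^2-k)\sin_k^2 r$. For $k=0$ with $\mu \neq 0$ and for $k=-1$ with any $\mu$, the coefficient $\mu^2 - k$ is strictly positive, so this function of $r$ is strictly increasing on $[0,\infty)$ with value $1$ exactly at $r=0$. Hence $c^2 < 1$ for $r > 0$, and the map $r \mapsto c(r) = 1/\sqrt{1+(\mu^2-k)\sin_k^2 r}$ is a strictly decreasing bijection from $(0,\infty)$ onto $(0,1)$.

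For the curvature and torsion, my strategy is to use that $h$ is the orbit of a one-parameter group of ambient isometries (the screw motion combining axial translation with rotation around $\alpha$). This Killing symmetry preserves the Frenet apparatus, so $\kappa$ and $\tau$ are constant along $h$ and may be evaluated at $t = 0$ only. At that point I would switch to an explicit ambient model: $\R^3$ for $k=0$ or the hyperboloid $\{x \in \R^4 \mid \langle x,x\rangle_{-1} = -1\}$ for $k=-1$, placing $\alpha$ along a coordinate geodesic and $h(0)$ in the normal disk at $\alpha(0)$. Writing $h(t)$ in closed form and differentiating, the curvature comes from $\nabla_T T$: for $k=0$ this is simply $h''$, while for $k=-1$ the projection onto the tangent space gives $\nabla_T T = h'' - h$ (using $\langle h,h\rangle_{-1} = -1$ and $\langle h',h'\rangle_{-1} = 1$). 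Then $\kappa^2 = |\nabla_T T|^2$, and one further extrinsic differentiation combined with the Frenet equation $\nabla_T N = -\kappa T + \tau B$ delivers $\tau$.

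The main obstacle is the algebraic cleanup that converts these expressions into the compact form (\ref{CurvAndTorsion}). The key simplifying identity, derivable from (\ref{UnitSpeed}), is $c^2(\mu^2 - k)\sin_k^2 r = 1 - c^2$, which eliminates $r$ in favor of $c$ and $\mu$. After substitution one obtains $\kappa^2 = (c^2\mu^2 - k)(1-c^2)$ and $\tau^2 = c^4\mu^2$; the sign of $\tau$ is then settled by matching the orientation of the Frenet frame $(T,N,B)$ with the chosen orientation of $M_k$.
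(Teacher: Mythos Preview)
Your proposal is correct and takes a somewhat different route from the paper's proof. The paper never introduces Fermi coordinates: it realizes the helix as $e^{tV(c,\mu)}p$, conjugates by the isometry $g$ carrying $p$ to $e_0$ to obtain $\bar h(t)=e^{tV}e_0$ with the explicit $4\times 4$ matrix $V$ in (\ref{matrixW}), and then feeds this into Lemma~\ref{HelixBasico}, which packages the ambient formulas $\kappa=\|V^2e_0+ke_0\|$ and $\kappa^2\tau=\langle V^3e_0,\,Ve_0\times(V^2e_0+ke_0)\rangle$. From these it first gets $\kappa^2=c^4(k-\mu^2)^2\cos_k^2 r\,\sin_k^2 r$ and then uses exactly your identity $c^2(\mu^2-k)\sin_k^2 r=1-c^2$ (together with its companion $c^2(\mu^2-k)\cos_k^2 r=c^2\mu^2-k$) to reach (\ref{CurvAndTorsion}).

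Your Fermi-coordinate computation of $|h'|^2$ is cleaner and more transparent than the paper's for the unit-speed condition and the bijection, and avoids any matrix work. For $\kappa$ and $\tau$ your plan and the paper's effectively coincide---both differentiate in the ambient linear model and project---except that the paper's triple-product formula delivers $\kappa^2\tau=\kappa^2 c^2\mu$ directly with the correct sign, whereas you obtain $\tau^2=c^4\mu^2$ and must then argue the sign separately; that last step is the only place where your outline is a bit thin, but it is easily settled by evaluating the sign of $\langle \nabla_T N,\,T\times N\rangle$ at $t=0$ in your explicit model.
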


The next proposition asserts that the axis of a periodic helix is periodic.
Moreover, it presents the notion of a helix turning $q$ times around its
axis while this runs $p$ times its period.

\begin{proposition}
\label{PeriodicHelix}Let $k=0,-1$. Let $h$ be a unit speed helix in $M$ with
angular speed $\mu $, and axis $\alpha $ of speed $c>0$. If $h$ is periodic,
say, of length $L$, then $\alpha $ is periodic, say, of complex length $\ell
+i\theta $, and there exist unique $q\in \mathbb{N}$ and $p\in \mathbb{Z}$
with $\left( p,q\right) =1$ such that%
\begin{equation}
cL=q\ell \text{\qquad and \qquad }\mu \ell =2\pi p/q-\theta \text{.}
\label{DisplayPer}
\end{equation}%
Conversely, suppose that $\alpha $ is periodic, say, of complex length $\ell
+i\theta $ and axis of speed $c>0$. Given $q\in \mathbb{N}$, $p\in \mathbb{Z}
$, $\left( p,q\right) =1$, there exists a periodic unit speed helix $h$ with
axis $\alpha $, whose length $L$ and angular speed $\mu $ are given by \emph{%
(\ref{DisplayPer})}.
\end{proposition}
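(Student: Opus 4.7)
The plan is to lift everything to the universal Riemannian cover $\pi: M_k\to M=\Gamma\backslash M_k$ and recognize periodicity of the helix as the condition that a specific element of a screw-motion one-parameter group lies in the deck group $\Gamma$. Lift $h$ to a unit-speed helix $\tilde h$ in $M_k$; its axis $\tilde\alpha$ is a unit-speed geodesic projecting to $\alpha$, which we parametrize with speed $c$ so that $\pi(\tilde\alpha(ct))=\alpha(t)$. By the definition of axis, radius and angular speed, there is a one-parameter group $\{\phi_s\}_{s\in\mathbb{R}}$ of isometries of $M_k$ --- $\phi_s$ translates $\tilde\alpha$ by arc-length $s$ and simultaneously rotates by angle $\mu s$ around $\tilde\alpha'$ --- such that $\tilde h(t)=\phi_{ct}(p_0)$, where $p_0$ lies at distance $r$ from $\tilde\alpha(0)$ and $r$ is determined by (\ref{UnitSpeed}).

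For the forward direction, $h$ is periodic of minimum period $L$ if and only if $\phi_{cL}\in\Gamma$ with $cL$ minimal. The nontrivial direction uses rigidity: since $\tilde h$ has positive curvature, any two isometries of $M_k$ agreeing on it coincide, so the deck transformation sending $\tilde h(t)$ to $\tilde h(t+L)=\phi_{cL}(\tilde h(t))$ must equal $\phi_{cL}$ itself. As $\phi_{cL}$ preserves $\tilde\alpha$ as an oriented geodesic, it lies in the stabilizer of $\tilde\alpha$ in $\Gamma$, which (since $\Gamma$ acts freely) is infinite cyclic, generated by the primitive element $\gamma_0$ realizing $\alpha$ as a periodic geodesic of $M$; in particular $\alpha$ is periodic. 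A direct computation relating parallel transport in $M_k$ along $\tilde\alpha$ to parallel transport in $M$ along $\alpha$ via Definition \ref{holonomy} identifies $\gamma_0$ as the screw motion along $\tilde\alpha$ with translation $\ell$ and rotation $-\theta$, so $\gamma_0^q$ has translation $q\ell$ and rotation $-q\theta$. Matching $\phi_{cL}=\gamma_0^q$ then gives $cL=q\ell$ and $\mu cL\equiv -q\theta\pmod{2\pi}$, which is precisely (\ref{DisplayPer}) for some $p\in\mathbb{Z}$. Coprimality $(p,q)=1$ follows from minimality of $L$: any common divisor $d>1$ would yield $\phi_{cL/d}=\gamma_0^{q/d}\in\Gamma$, producing a strictly smaller period.

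For the converse, given coprime $p\in\mathbb{Z}$, $q\in\mathbb{N}$, set $L=q\ell/c$ and $\mu=(2\pi p/q-\theta)/\ell$, solve (\ref{UnitSpeed}) for $r$, and form $\tilde h(t)=\phi_{ct}(p_0)$ with $p_0$ at distance $r$ from $\tilde\alpha(0)$. The isometries $\phi_{cL}$ and $\gamma_0^q$ have equal translations along $\tilde\alpha$ (both equal to $q\ell$) and rotations differing by the integer multiple $2\pi p$ of $2\pi$, so they coincide as isometries of $M_k$; hence $\phi_{cL}=\gamma_0^q\in\Gamma$ and $\tilde h$ descends to a periodic helix $h$ of period dividing $L$. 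Coprimality again rules out a strictly smaller period, so $L$ is the minimum.

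The main obstacle will be pinning down the rotational part of $\gamma_0$ as exactly $-\theta$ (with the correct sign) through a careful reading of Definition \ref{holonomy}; concretely, one must establish $\gamma_0=\tau_\ell\circ\operatorname{Rot}(\tilde\alpha'(0),-\theta)$, where $\tau_\ell$ denotes the transvection of $M_k$ along $\tilde\alpha$ by distance $\ell$. Once this identification is in hand, all remaining steps reduce to routine bookkeeping with compositions of screw motions along a common axis.
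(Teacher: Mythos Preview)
Your proposal is correct and follows essentially the same approach as the paper. The paper's proof consists entirely of a reference to Lemma~8 of \cite{CS}, together with the dictionary $\phi_t=\exp(tV(c,\mu))$, $T_0=L$, $T=\ell/c$, $b=c\mu$; this is precisely the one-parameter screw-motion group you introduce, and your argument (periodicity $\Leftrightarrow$ $\phi_{cL}\in\Gamma$, the stabilizer of the lifted axis in $\Gamma$ is infinite cyclic because pure rotations fix axis points and $\Gamma$ acts freely, then match translation and rotation parts and use minimality for coprimality) is exactly what that referenced lemma does. Your identification $\gamma_0=\tau_\ell\circ\operatorname{Rot}(\tilde\alpha'(0),-\theta)$ and the resulting sign in $\mu\ell=2\pi p/q-\theta$ are correct.
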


\begin{definition}
A periodic helix as above is said to be of type $\left( \ell +i\theta
,q,p\right) $.
\end{definition}

\begin{remark}
Defining the number of turns is a delicate issue because of the holonomy of
the axis. For instance, if $q=1$ and $p=0$, then the torsion of $h$ is not
necessarily zero, that is, a helix turning zero times around the axis is not
necessarily contained in a totally geodesic surface containing the axis.
\end{remark}

\medskip

\noindent \textbf{Goal. }Our goal is to obtain the lengths of all periodic
sub-Riemannian geodesics $\gamma $ of $\left( SO\left( M\right) ,\mathcal{D}%
^{\lambda }\right) $ projecting to a helix $h$ of type $\left( \ell +i\theta
,q,p\right) $, only in terms of these parameters.

We have sketched the steps to achieve this in the introduction. We return to
them, providing the details.

By (\ref{CurvAndTorsion}) and the first equation in (\ref{DisplayPer}) we
have that $\kappa ,\tau $ and $L$ can be written in terms of $q$, $\ell $, $%
c $ and $\mu $, which, in their turn, depend only on $\left( \ell +i\theta
,q,p\right) $ and $r$, by the unit speed condition (\ref{UnitSpeed}) and the
second equation in (\ref{DisplayPer}).

We show these dependencies by expressing Equation (\ref{FundamentalEqShort})
as

\begin{equation}
nL_{\ell ,\theta ,q,p}\left( r\right) \sqrt{\left( \kappa _{\ell ,\theta
,q,p}\left( r\right) \right) ^{2}+\left( \lambda -\tau _{\ell ,\theta
,q,p}\left( r\right) \right) ^{2}}=2\pi m\text{,}  \label{bigEq}
\end{equation}%
which, more precisely, by Proposition \ref{helixKappaTau}, translates into%
\begin{equation}
n\frac{q\ell }{c}\sqrt{\left( c^{2}\mu ^{2}-k\right) \left( 1-c^{2}\right)
+\left( \lambda -c^{2}\mu \right) ^{2}}=2\pi m\text{,}  \label{Eq0}
\end{equation}%
where%
\begin{equation}
c^{2}=\frac{1}{\cos _{k}^{2}r+\mu ^{2}\sin _{k}^{2}r}\text{ \qquad and
\qquad }\mu =\frac{2\pi p/q-\theta }{\ell }\text{.}  \label{Eq2}
\end{equation}%
It is not difficult to show that Equation (\ref{Eq0}) is equivalent to%
\begin{equation}
\left( \left( \frac{2\pi m}{q\ell n}\right) ^{2}+\left( \lambda
^{2}-k\right) -\left( \mu -\lambda \right) ^{2}\right) c^{2}=\lambda ^{2}-k%
\text{.}  \label{EqN}
\end{equation}

The computations above are the basis of the proof of the following result,
which gives the lengths of all periodic sub-Riemannian geodesics of $%
SO\left( M\right) $ projecting to periodic helices of type $\left( \ell
+i\theta ,q,p\right) $.

\begin{theorem}
\label{TeoFormulaLarga}Let $M\ $be a complete oriented three-dimensional
Riemannian manifold of constant sectional curvature $k=0,-1$. Let $\lambda
\in \mathbb{R}$, with $\lambda \neq 0$ if $k=0$. Let $h$ be a periodic unit
speed helix in $M$ of type $\left( \ell +i\theta ,q,p\right) $ \emph{(}in
particular, the angular speed is $\mu =\left( 2\pi p/q-\theta \right) /\ell $%
\emph{)} and radius $r$.

Then $h$ is the projection of a periodic sub-Riemannian geodesic $\gamma $
of $SO\left( M,\mathcal{D}^{\lambda }\right) $ if and only if $r$ is the 
\emph{(}unique\emph{)} positive solution of the a system of equations \emph{(%
\ref{EqN})} and \emph{(\ref{Eq2})} for some positive coprime integers $m,n$
with%
\begin{equation}
2\pi m>nq\ell \left\vert \mu -\lambda \right\vert \text{,}
\label{conditionMN}
\end{equation}%
that is, 
\begin{equation}
r=\arcsin _{k}\left( \frac{4\pi ^{2}m^{2}-n^{2}\left( 2\pi p-q\theta
-\lambda \ell q\right) ^{2}}{n^{2}\left( \lambda ^{2}-k\right) \left( \left(
2\pi p-q\theta \right) ^{2}-k\ell ^{2}q^{2}\right) }\right) \text{.}
\label{radiusBig}
\end{equation}

In this case, we have that 
\begin{equation}
\operatorname{length}\left( \gamma \right) =\sqrt{\frac{4\pi ^{2}m^{2}+n^{2}\left(
\ell ^{2}\left( \lambda ^{2}-k\right) q^{2}-\left( 2\pi p-q\left( \theta
+\lambda \ell \right) \right) ^{2}\right) }{\lambda ^{2}-k}}\text{.}
\label{lengthBig}
\end{equation}
\end{theorem}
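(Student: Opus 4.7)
The plan is to reduce the theorem to Proposition~\ref{fundamentalProp}(c) and then carry out the algebra that the excerpt has already set up. Suppose $\gamma$ is a periodic sub-Riemannian geodesic in $SO(M,\mathcal{D}^\lambda)$ projecting to $h$. By Proposition~\ref{fundamentalProp}(c), this happens exactly when there are coprime positive integers $m,n$ with $nL\sqrt{\kappa^{2}+(\lambda-\tau)^{2}}=2m\pi$, and then $\operatorname{length}(\gamma)=nL$. By Proposition~\ref{PeriodicHelix}, $L=q\ell/c$ and $\mu=(2\pi p/q-\theta)/\ell$; by Proposition~\ref{helixKappaTau}, $\kappa^{2}=(c^{2}\mu^{2}-k)(1-c^{2})$ and $\tau=c^{2}\mu$. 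Substituting gives exactly equation~(\ref{Eq0}), with $r$ hidden in $c$ through (\ref{Eq2}). So the whole theorem amounts to the equivalence (\ref{Eq0}) $\Leftrightarrow$ (\ref{EqN}), then extracting $r$ and the length.

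The algebraic step I would do first is to square (\ref{Eq0}) and expand the radicand:
\begin{equation*}
(c^{2}\mu^{2}-k)(1-c^{2})+(\lambda-c^{2}\mu)^{2}
=c^{2}\bigl(\mu^{2}-2\lambda\mu+k\bigr)+(\lambda^{2}-k),
\end{equation*}
then rewrite $\mu^{2}-2\lambda\mu+k=(\mu-\lambda)^{2}-(\lambda^{2}-k)$. Dividing by $c^{2}(q\ell n)^{2}$ and rearranging yields
\begin{equation*}
\Bigl(\bigl(\tfrac{2\pi m}{q\ell n}\bigr)^{2}+(\lambda^{2}-k)-(\mu-\lambda)^{2}\Bigr)\,c^{2}=\lambda^{2}-k,
\end{equation*}
which is (\ref{EqN}). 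Since $\lambda^{2}-k>0$ under our hypotheses, the factor in parentheses must be positive, and in fact this forces $c^{2}<1$ exactly when $(2\pi m/(nq\ell))^{2}>(\mu-\lambda)^{2}$, i.e.\ the condition~(\ref{conditionMN}). Conversely, (\ref{conditionMN}) makes (\ref{EqN}) solvable with a unique $c\in(0,1)$, and (\ref{Eq2}), rewritten using $\cos_{k}^{2}+k\sin_{k}^{2}=1$ as $(\mu^{2}-k)\sin_{k}^{2}r=(1-c^{2})/c^{2}$, then determines a unique positive $r$ (one checks $\mu^{2}-k>0$ since either $k=-1$, or $k=0$ and $\mu\neq 0$ because $h$ is not a geodesic). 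This settles both the equivalence and the uniqueness claim.

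From here I would derive the explicit formulas by computing $1/c^{2}-1$ from (\ref{EqN}),
\begin{equation*}
\frac{1-c^{2}}{c^{2}}=\frac{(2\pi m/(nq\ell))^{2}-(\mu-\lambda)^{2}}{\lambda^{2}-k},
\end{equation*}
and substituting into $\sin_{k}^{2}r=(1-c^{2})/(c^{2}(\mu^{2}-k))$. Clearing denominators using $q\ell\mu=2\pi p-q\theta$ and $q\ell(\mu-\lambda)=2\pi p-q\theta-\lambda\ell q$, and multiplying through by $n^{2}$, produces (\ref{radiusBig}). For the length, I compute
\begin{equation*}
\operatorname{length}(\gamma)^{2}=n^{2}L^{2}=\frac{n^{2}q^{2}\ell^{2}}{c^{2}}
=\frac{4\pi^{2}m^{2}+n^{2}q^{2}\ell^{2}(\lambda^{2}-k)-n^{2}q^{2}\ell^{2}(\mu-\lambda)^{2}}{\lambda^{2}-k},
\end{equation*}
and again replace $q\ell(\mu-\lambda)$ by $2\pi p-q\theta-\lambda\ell q$ to obtain (\ref{lengthBig}).

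The main obstacle is bookkeeping rather than conceptual: keeping the sign/positivity conditions straight so that the square root in the closing condition is handled correctly, verifying that (\ref{conditionMN}) is exactly what forces $c\in(0,1)$ (and not merely $c^{2}>0$), and confirming that the radicand in (\ref{radiusBig}) is nonnegative so that $\arcsin_{k}$ is well defined. One should also observe that every coprime pair $(m,n)$ satisfying (\ref{conditionMN}) does produce an $r$ with a corresponding helix of the prescribed type, so the statement is an honest biconditional: given $(\ell+i\theta,q,p)$ and $r$, the existence of a lifting periodic geodesic is equivalent to $r$ having the form (\ref{radiusBig}) for some admissible $(m,n)$.
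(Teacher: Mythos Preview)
Your proposal is correct and follows essentially the same approach as the paper: reduce to Proposition~\ref{fundamentalProp}(c), pass from (\ref{Eq0}) to (\ref{EqN}), read off condition (\ref{conditionMN}) from the constraint $0<c^{2}<1$, and then solve for $\sin_{k}^{2}r$ and for $nL=nq\ell/c$. You supply more of the intermediate algebra than the paper does (the paper relegates the passage (\ref{Eq0})\,$\Leftrightarrow$\,(\ref{EqN}) to the discussion before the statement and omits the explicit manipulations for (\ref{radiusBig}) and (\ref{lengthBig})), and you also make explicit the check $\mu^{2}-k>0$ needed for the $r$-equation to have a solution, but the logical skeleton is identical.
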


Notice that $r$ and the length of $\gamma $ are presented exclusively in
terms of $\ell +i\theta ,q,p,n,m$.

\medskip

Now we are ready to detail the sub-Riemannian length spectrum of $\left(
SO\left( M\right) ,\mathcal{D}^{\lambda }\right) $\ in terms of the complex
length spectrum of $M$, under the additional hypothesis that the injectivity
radius is positive.

Let $C\mathcal{L}\left( M\right) $ be the complex length spectrum of $M$ and
let $\ell +i\theta \in C\mathcal{L}$.

Let $\mathcal{L}\left( \ell +i\theta \right) $ be the set of all lengths of
sub-Riemannian geodesics in $SO\left( M,\mathcal{D}^{\lambda }\right) $\
projecting to periodic geodesics in $M$ with complex length $\ell +i\theta $%
, By Proposition \ref{fundamentalProp} (b) it is equal to%
\begin{equation*}
\mathcal{L}\left( \ell +i\theta \right) =\left\{ n\ell \mid 
\begin{tabular}{l}
$\lambda \ell +\theta =2\pi \tfrac{m}{n}~\text{for some coprime}$ \\ 
$\text{integers }n,m,\text{with }n>0$%
\end{tabular}%
\right\} \text{.}
\end{equation*}

Now, let $q,p$ coprime integers with $q>0$ (we establish the convention that 
$q=1$ if $p=0$). For any pair of coprime numbers $m,n\in \mathbb{N}$
satisfying the condition in (\ref{conditionMN}), or equivalently,%
\begin{equation}
4m^{2}\pi ^{2}>n^{2}\left( 2\pi p-\left( \theta +\lambda \ell \right)
q\right) ^{2}\text{,}  \label{EqMu}
\end{equation}%
call $l\left( \ell +i\theta ,q,p;n,m\right) $ the expression in (\ref%
{lengthBig}) and let%
\begin{equation*}
\mathcal{L}\left( \ell +i\theta ,q,p\right) =\left\{ l\left( \ell +i\theta
,q,p;n,m\right) \mid 
\begin{tabular}{l}
$n,m\in \mathbb{N}~\text{are coprime }$and \\ 
satisfy Equation (\ref{EqMu})%
\end{tabular}%
\right\} \text{.}
\end{equation*}%
By Theorem \ref{TeoFormulaLarga}, this is the set of all lengths of periodic
sub-Riemannian geodesics projecting to helices of type $\left( \ell +i\theta
,q,p\right) $.

The union of $\mathcal{L}$ in (\ref{LSarriba}) with the two sets above turns
out to be the whole sub-Riemannian length spectrum, provided that $M$ has
positive injectivity radius:

\begin{theorem}
\label{unionSpectrum}Let $M\ $be a complete oriented three-dimensional
Riemannian manifold of constant sectional curvature $k=0,-1$ with positive
injectivity radius. Let $\lambda \in \mathbb{R}$, with $\lambda \neq 0$ if $%
k=0$. Then the length spectrum of $SO\left( M,\mathcal{D}^{\lambda }\right) $
is given by 
\begin{equation*}
\mathcal{L}\bigcup \left( \bigcup_{\ell +i\theta \in \mathfrak{CL}\left(
M\right) }\mathcal{L}\left( \ell +i\theta \right) \right) \bigcup \left(
\bigcup_{\ell +i\theta \in \mathfrak{CL}\left( M\right) }\bigcup_{\mathcal{Q}%
}\mathcal{L}\left( \ell +i\theta ,q,p\right) \right) \text{,}
\end{equation*}%
where $\mathcal{L}$ is as in \emph{(\ref{LSarriba})} and $\mathcal{Q=}%
\left\{ \left( q,p\right) \in \mathbb{N}\times \mathbb{Z}\mid \left(
q,p\right) =1\right\} $.
\end{theorem}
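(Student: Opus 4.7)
The strategy is to partition the periodic sub-Riemannian geodesics of $(SO(M),\mathcal{D}^\lambda)$ according to the type of periodic helix they project to in $M$. By Proposition \ref{fundamentalProp}(a) every such geodesic does project to a periodic helix, so the inclusion ``$\subseteq$'' in the claimed length-spectrum equality reduces to enumerating periodic helices in $M$ and identifying, for each, the lengths of their periodic sub-Riemannian lifts. The reverse inclusion ``$\supseteq$'' requires, for every candidate length, an actual periodic helix of the corresponding type in $M$ together with a closed lift in $SO(M)$ realizing that length.

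The possible periodic helices in $M$ fall a priori into four classes: periodic geodesics, circles, horocycles, and helices admitting a periodic geodesic axis. The positive-injectivity-radius hypothesis is what rules out horocycles: a periodic horocycle in $\Gamma\backslash H^3$ would be stabilized by a parabolic element of $\Gamma$, and any such element forces a cusp, hence arbitrarily small injectivity radius; for $k=0$ the horocycle case is vacuous. Of the remaining three classes, circles in $M_k$ embed in $M$ because $\Gamma$ is torsion-free, and the sub-Riemannian geodesics of $SO(M_k)$ projecting to them descend to $SO(M)$ with the same primitive period; by Corollary \ref{lengthSsc} this contributes precisely $\mathcal{L}$. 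Periodic geodesics of complex length $\ell+i\theta$ contribute $\mathcal{L}(\ell+i\theta)$ by Proposition \ref{fundamentalProp}(b). Helices admitting a periodic axis are parametrized, via Proposition \ref{PeriodicHelix}, by a type $(\ell+i\theta,q,p)\in\mathfrak{CL}(M)\times\mathcal{Q}$ and a positive radius $r$; Theorem \ref{TeoFormulaLarga} selects exactly those radii for which a closed lift exists and identifies the corresponding lengths with $\mathcal{L}(\ell+i\theta,q,p)$. The realization direction is then immediate, since for every admissible $(\ell+i\theta,q,p,n,m)$ the existence of the helix is ensured by Proposition \ref{PeriodicHelix} and of the closed sub-Riemannian lift by Theorem \ref{TeoFormulaLarga}.

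The main obstacle I anticipate is the case analysis itself, specifically the rigorous exclusion of periodic horocycles in the hyperbolic case via the injectivity-radius hypothesis, and the verification that circles do not also appear in the ``axis + radius'' framework of Proposition \ref{PeriodicHelix} (they fail the condition $\mu\neq 0$ in the flat setting, and in the hyperbolic setting they have $\kappa>1$ but genuinely lie in totally geodesic surfaces rather than winding around a geodesic line). Once it is established that every periodic helix falls unambiguously into exactly one of the three remaining classes, the theorem reduces to assembling the three already proved statements for each class and taking the union.
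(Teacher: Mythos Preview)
Your proposal is correct and follows essentially the same route as the paper: classify periodic helices in $M$ into geodesics, circles, horocycles, and helices with periodic axes; discard horocycles via the parabolic/injectivity-radius argument; handle circles through the torsion-freeness of $\Gamma$ (the paper phrases this as the impossibility of a finite-order elliptic element) together with Proposition~\ref{PropoBasic}; and read off the remaining two contributions from Proposition~\ref{fundamentalProp}(b) and Theorem~\ref{TeoFormulaLarga}. The only differences are organizational: you separate the two inclusions explicitly and flag the disjointness of the circle class from the axis framework, whereas the paper treats the latter as already settled by Definition~\ref{radiusEtc} and the surrounding discussion in Section~\ref{Section5}.
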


As in \cite{Corey}, we do not consider multiplicities of the lengths. In our
setting, the difficulty to obtain them comes from the fact that the typical
fiber of the bundle $SO\left( M\right) \rightarrow M$, which is $SO\left(
3\right) $, is not simply connected (it has fundamental group $\mathbb{Z}%
_{2} $): Let $t\mapsto \gamma \left( t\right) =\left( h\left( t\right)
,b\left( t\right) \right) $ be a periodic sub-Riemannian geodesic of $\left(
SO\left( M\right) ,\mathcal{D}^{\lambda }\right) $ projecting to a periodic
helix of type $\left( \ell +i\theta ,q,p\right) $ and axis $\alpha $. Using $%
n$ as in Theorem \ref{TeoFormulaLarga} and that the helix is free homotopic
to $\left. \alpha \right\vert _{\left[ 0,qc\ell \right] }$, we would be able
to determine the free homotopy class of $\gamma $ as long as we could keep
track of the cumulative spinning of $b\left( t\right) $ in the period of $%
\gamma $, but we do not know how to do so (recall from Remark \ref{Remarkab}
(a) that $b\left( t\right) $ combines two rotations).

\begin{corollary}
If two closed oriented three-dimensional hyperbolic manifolds are complex
length isospectral, then their bundles of direct orthonormal frames, with
the corresponding $\lambda $-screw sub-Riemannian structures, are length
isospectral.
\end{corollary}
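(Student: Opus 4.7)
The plan is to deduce this corollary as an immediate consequence of Theorem \ref{unionSpectrum}, since that theorem already expresses the sub-Riemannian length spectrum of $\left(SO(M),\mathcal{D}^{\lambda}\right)$ purely in terms of data intrinsic to the complex length spectrum of $M$ (and the fixed parameters $k=-1$, $\lambda$). The work is just to point out that no other information about $M$ enters the formula.

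First I would let $M_{1}$ and $M_{2}$ be two closed oriented hyperbolic $3$-manifolds with $\mathfrak{CL}(M_{1})=\mathfrak{CL}(M_{2})$. Since both are closed, they have positive injectivity radius, so the hypotheses of Theorem \ref{unionSpectrum} are met for each with $k=-1$. The length spectrum of $SO(M_{i},\mathcal{D}^{\lambda})$ is therefore
\begin{equation*}
\mathcal{L}\;\bigcup\;\Bigl(\bigcup_{\ell+i\theta\in\mathfrak{CL}(M_{i})}\mathcal{L}(\ell+i\theta)\Bigr)\;\bigcup\;\Bigl(\bigcup_{\ell+i\theta\in\mathfrak{CL}(M_{i})}\bigcup_{\mathcal{Q}}\mathcal{L}(\ell+i\theta,q,p)\Bigr).
\end{equation*}

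Next I would observe the three sources of dependence on $M_{i}$ in the above expression, and check them one by one. The first set $\mathcal{L}$ in (\ref{LSarriba}) depends only on $\lambda$ and $k=-1$, so it is identical for $M_{1}$ and $M_{2}$. The sets $\mathcal{L}(\ell+i\theta)$ and $\mathcal{L}(\ell+i\theta,q,p)$, as defined just before Theorem \ref{unionSpectrum}, are built from the explicit closed-form expressions (\ref{lengthBig}) and the coprimality/positivity conditions (\ref{EqMu}): they are functions solely of $\ell$, $\theta$, $q$, $p$, $n$, $m$ and the fixed parameters $\lambda,k$. In particular, they do not see any further structure of $M_{i}$ beyond the complex length $\ell+i\theta$ entering the outer index.

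Consequently, once the indexing set $\mathfrak{CL}(M_{1})=\mathfrak{CL}(M_{2})$ is common and the inner set $\mathcal{Q}$ is the same universal set in both cases, the two triple unions yield identical collections of real numbers. Hence the sub-Riemannian length spectra of $\left(SO(M_{1}),\mathcal{D}^{\lambda}\right)$ and $\left(SO(M_{2}),\mathcal{D}^{\lambda}\right)$ coincide, which is the claim. There is no genuine obstacle: the only point to verify carefully is that none of the sets appearing in the Theorem \ref{unionSpectrum} formula depends on the isometry type of $M$ beyond its complex length spectrum, which is transparent from their definitions.
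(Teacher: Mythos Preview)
Your proposal is correct and takes essentially the same approach as the paper: the corollary is stated without separate proof there, as it follows immediately from Theorem \ref{unionSpectrum} once one notes that the displayed union depends on $M$ only through $\mathfrak{CL}(M)$ (and the fixed $k=-1$, $\lambda$), and that closed manifolds have positive injectivity radius.
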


Note that there exist closed hyperbolic 3-manifolds with equal complex
length spectra but different volumes (see for instance \cite{ReidNuevo}), in
particular, not isometric.

\section{Helices\label{Section5}}

In order to prove the results we need some properties of helices. In
particular, we must relate the speed of the axis, the angular speed and the
radius with the curvature and the torsion. We also study closing conditions
in terms of the complex length of the axis. For $p\in M_{k}$, let $\operatorname{Exp}%
_{p}:T_{p}M_{k}\rightarrow M_{k}$ be the geodesic exponential map of $M_{k}$
associated with $p$.

\begin{definition}
Let $k=0,1,-1$. The helix in $M_{k}$ in standard position with radius $r>0$,
angular speed $\mu \in \mathbb{R}$ $\mathcal{(}$with $\mu \neq 0$ if $k=0%
\mathcal{)}$, and axis with speed $c>0$ is the curve%
\begin{equation*}
H\left( t\right) =\operatorname{Exp}_{A\left( t\right) }\left( r\left( \cos \left(
c\mu t\right) E_{2}\left( t\right) +\sin \left( c\mu t\right) E_{3}\left(
t\right) \right) \right) \text{,}
\end{equation*}%
where $A$ is the geodesic in $M_{k}$ with $A\left( 0\right) =e_{0}$ and
initial velocity $ce_{1}$, that is,%
\begin{equation*}
A\left( t\right) =\cos _{k}\left( ct\right) ~e_{0}+\sin _{k}\left( ct\right)
~e_{1}
\end{equation*}%
\emph{(}called the axis of $H$\emph{)}, and $E_{i}\left( t\right) $ is the
parallel transport of $e_{i}\in T_{e_{0}}M_{k}\equiv \mathbb{R}^{3}$ along $%
A $, between $0$ and $t$, for $i=1,2$.
\end{definition}

We have geodesic segments of length $r$, with one endpoint on the axis, and
orthogonal to it, rotating with angular speed $\mu $ around the axis, with
respect to arc length of the latter. The condition on $\mu $ in the
Euclidean case forbids straight lines in $\mathbb{R}^{3}$ as helices with
axes.

The helix $H$ can be also presented as the orbit of the point $p=\left( \cos
_{k}r,0,\sin _{k}r,0\right) $ under the action of the monoparametric
subgroup $t\mapsto \exp \left( tV\left( c,\mu \right) \right) $, with%
\begin{equation}
V\left( c,\mu \right) =c\left( 
\begin{array}{cc}
0 & -ke_{1}^{T} \\ 
e_{1} & L_{\mu e_{1}}%
\end{array}%
\right) \in \mathfrak{g}_{k}\text{.}  \label{VcMu}
\end{equation}%
Equivalently,%
\begin{equation*}
H\left( t\right) =\text{diag}\left( R_{k}\left( ct\right) ,R_{1}\left( c\mu
t\right) \right) \left( \cos _{k}r,0,\sin _{k}r,0\right) ^{T}\text{,}
\end{equation*}%
where%
\begin{equation*}
R_{k}\left( s\right) =\left( 
\begin{array}{cc}
\cos _{k}s & -k\sin _{k}s \\ 
\sin _{k}s & \cos _{k}s%
\end{array}%
\right) \text{.}
\end{equation*}%
Note that for $k=1,-1$, $R_{k}\left( s\right) $ is the standard or the
hyperbolic rotation through the angle $s$, respectively (recall from (\ref%
{sink}) the definition of $\sin _{k}$ and $\cos _{k}$).

All helices in $M_{k}$ are congruent to a helix of this form, except
geodesics, circles and, in the hyperbolic case, horocycles. Those with $%
\tau=0$ are hypercycles in totally geodesic hyperbolic planes in $H^3$; for
them, $\kappa <1$ holds.

\begin{definition}
\label{radiusEtc}Let $M$ be a complete oriented three-dimensional Riemannian
manifold of constant sectional curvature $k=0,1,-1$ and write $M=\Gamma
\backslash M_{k}$, as usual. A helix $h$ in $M$ with radius $r>0$, angular
speed $\mu $ \emph{(}with $\mu \neq 0$ if $k=0$\emph{)} and axis
with speed $c>0$ is by definition the projection to $M$ of a curve in $M_{k}$
congruent to $H$ by an element $g$ of $G_{k}$. The axis of $h$ is the
projection to $M$ of the geodesic $g\circ A$, where $A$ is the axis of $H$.
\end{definition}

The axis of $h$ is well defined, since although $h$ has as many lifts to $%
M_{k}$ as $\left\vert \Gamma \right\vert $, the element $g$ of $G_{k}$
referred to above is unique, since $G_{k}$ acts simply transitively on $%
SO\left( M_{k}\right) $ and a helix is determined by $F\left( 0\right) $
(the initial value of its Frenet frame), its curvature and torsion.

\subsection{Curvature and torsion of helices\nopunct}

\mbox{ }

\smallskip

We give a mostly self-contained presentation of the topics of this
subsection, although there is some overlapping with \cite{CS}. Unlike in
that article, here we deal in part with the three cases $k=0,1,-1$, use the
more suitable parameter $\mu $ (angular speed) and need stronger properties.
Also, some arguments are simpler.

\begin{lemma}
\label{HelixBasico}Let $k=0,1,-1$ and let $V\in \mathfrak{g}_{k}$. Then the
curve $h\left( t\right) =e^{tV}e_{0}$ is a helix. Suppose that $h$ has unit
speed and let $\kappa $ be the curvature of $h$. If $\kappa >0$, let $N$ be
the normal vector field and $\tau $ the torsion of $h$. Then 
\begin{equation}
\kappa =\left\Vert V^{2}e_{0}+ke_{0}\right\Vert \text{\qquad and \qquad }%
\kappa ^{2}\tau =\left\langle V^{3}e_{0},Ve_{0}\times \left(
V^{2}e_{0}+ke_{0}\right) \right\rangle \text{.}  \label{kappaAndTau}
\end{equation}
\end{lemma}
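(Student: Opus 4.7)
The strategy is to work in the ambient $\mathbb{R}^{4}$ equipped with $\langle\cdot,\cdot\rangle_{k}$, to realize covariant derivatives along $h$ as tangential projections of ambient derivatives, and to exploit that $V$ commutes with $e^{tV}$.

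First I would show that $h(t)=e^{tV}e_{0}$ is a helix. The map $e^{sV}\in G_{k}$ is an orientation-preserving isometry of $M_{k}$ satisfying $e^{sV}h(t)=h(t+s)$, so pulling back the Frenet apparatus by this family of isometries forces the speed, curvature and torsion of $h$ to be independent of $t$. Writing $h^{(n)}(t)=e^{tV}V^{n}e_{0}$ for the ambient derivatives (for $k=0$ one checks that $V^{n}e_{0}\in\{v_{0}=0\}$ for $n\ge 1$ and that $e^{tV}$ acts on this subspace linearly via its rotational part, while for $k=\pm 1$ the map $e^{tV}$ is already linear) gives a uniform set-up. Since $T_{e_{0}}M_{k}=\{v_{0}=0\}$, and for $k=\pm 1$ the ambient normal at $e_{0}$ is spanned by $e_{0}$ with $\langle e_{0},e_{0}\rangle_{k}=k$, the projection of any $v\in\mathbb{R}^{4}$ onto the normal is $k^{-1}\langle v,e_{0}\rangle_{k}e_{0}=k\langle v,e_{0}\rangle_{k}e_{0}$ (with the convention that the projection is $0$ when $k=0$).

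For the curvature, skew-symmetry of $V$ with respect to $\langle\cdot,\cdot\rangle_{k}$ and $\|h'\|=1$ give $\langle V^{2}e_{0},e_{0}\rangle_{k}=-\langle Ve_{0},Ve_{0}\rangle_{k}=-1$. Hence the tangential projection of $h''(0)=V^{2}e_{0}$ is $V^{2}e_{0}+ke_{0}$ for $k=\pm 1$, and the same expression trivially for $k=0$. Thus $(\kappa N)(0)=\nabla_{h'}h'|_{0}=V^{2}e_{0}+ke_{0}$, and taking norms yields the first identity in \eqref{kappaAndTau}.

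For the torsion, the Frenet equation $\nabla_{h'}N=-\kappa T+\tau B$, together with the constancy of $\kappa$, gives
\[
\kappa^{2}\tau=\langle\nabla_{h'}(\kappa N),\,T\times(\kappa N)\rangle
\]
at every $t$. At $t=0$, $T=Ve_{0}$ and $\kappa N=V^{2}e_{0}+ke_{0}$, while the ambient derivative of $\kappa N$ at $0$ is $V^{3}e_{0}+kVe_{0}$. The correction term in the tangential projection is a multiple of $e_{0}$, which is orthogonal (in the induced Euclidean product on $\{v_{0}=0\}$) to $T\times(\kappa N)$, so it drops out; the summand $kVe_{0}$ also contributes nothing, because $Ve_{0}\perp Ve_{0}\times w$ for any $w$. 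What survives is precisely $\langle V^{3}e_{0},\,Ve_{0}\times(V^{2}e_{0}+ke_{0})\rangle$, giving the second identity.

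The main obstacle is the uniform bookkeeping across $k=0,\pm 1$: one must check that $\{v_{0}=0\}$ is the tangent space at $e_{0}$ in all three cases, that the restriction of $\langle\cdot,\cdot\rangle_{k}$ to this subspace is the standard Euclidean inner product on $\mathbb{R}^{3}$ (so that the cross product in the Frenet formulas matches the ambient one), and that $k^{-1}=k$ on $\{\pm 1\}$ allows the normal-projection formula to be written in a $k$-linear manner that specializes correctly when $k=0$.
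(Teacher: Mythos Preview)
Your proposal is correct and follows essentially the same route as the paper: realize $h$ as the orbit of a one-parameter group of isometries, use the tangential-projection formula $\tfrac{Dv}{dt}=v'-k\langle v',h\rangle_{k}h$ to get $\kappa N(0)=V^{2}e_{0}+ke_{0}$, and then compute $\kappa^{2}\tau$ via $\langle \tfrac{D}{dt}(\kappa N),\kappa B\rangle$ with $\kappa B(0)=Ve_{0}\times(V^{2}e_{0}+ke_{0})$, dropping the $kVe_{0}$ term because $\langle u,u\times w\rangle=0$. The only cosmetic difference is that the paper notes $V^{3}e_{0}+kVe_{0}$ is already tangent at $e_{0}$ (so no normal correction is needed), whereas you argue that any such correction would be orthogonal to the cross product; both observations are valid and lead to the same conclusion.
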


\begin{proof}
The curve $h$ is a helix since it is the orbit of a monoparametric group of
isometries of $M_{k}$. We have that $h^{\prime }\left( t\right)
=e^{tV}h^{\prime }\left( 0\right) =e^{tV}Ve_{0}$. Assuming that $h$ has
constant unit speed, $\left\langle Ve_{0},Ve_{0}\right\rangle =1 $ and $%
T\left( t\right) =h^{\prime }\left( t\right) $ hold. Similarly, $h^{\prime
\prime }\left( t\right) =e^{tV}h^{\prime \prime }\left( 0\right)
=e^{tV}V^{2}e_{0}$.

Recall that for $k=1,-1$, the covariant derivative of a vector field $v$
along $h$ at $t$ is the orthogonal projection of $v^{\prime }\left( t\right) 
$ onto $T_{h\left( t\right) }M_{k}=h\left( t\right) ^{\perp }$. The formula 
\begin{equation}
\tfrac{Dv}{dt}\left( t\right) =v^{\prime }\left( t\right) -k\left\langle
v^{\prime }\left( t\right) ,h\left( t\right) \right\rangle _{k}h\left(
t\right)  \label{CovDer}
\end{equation}%
is also valid when the inner product (\ref{InnerProduct}) is degenerate.
Note that $kV$ is skew-symmetric since $V\in \mathfrak{g}_{\kappa }$. We
compute 
\begin{equation*}
k\left\langle h^{\prime \prime }\left( 0\right) ,h\left( 0\right)
\right\rangle _{\kappa }=k\left\langle V^{2}e_{0},e_{0}\right\rangle
_{\kappa }=-k\left\langle Ve_{0},Ve_{0}\right\rangle _{\kappa }=-k
\end{equation*}%
and then, by (\ref{CovDer}),%
\begin{equation}
\kappa N\left( 0\right) =\tfrac{Dh^{\prime }}{dt}\left( 0\right) =h^{\prime
\prime }\left( 0\right) -k\left\langle h^{\prime \prime }\left( 0\right)
,h\left( 0\right) \right\rangle h\left( 0\right) =V^{2}e_{0}+ke_{0}\text{.}
\label{kappaN(0)}
\end{equation}%
Hence, the assertion regarding $\kappa $ is true if $\kappa >0$.

Since the covariant derivative commutes with the isometry $e^{tV}$, we have
that 
\begin{equation*}
\kappa N\left( t\right) =\tfrac{Dh^{\prime }}{dt}\left( t\right) =e^{tV}%
\tfrac{Dh^{\prime }}{dt}\left( 0\right) =e^{tV}\left(
V^{2}e_{0}+ke_{0}\right) \text{.}
\end{equation*}%
Hence, $\left. \frac{d}{dt}\right\vert _{0}\kappa N\left( t\right)
=V^{3}e_{0}+kVe_{0}$, which belongs to $T_{e_{0}}M_{k}$, and so it coincides
with $\kappa \tfrac{DN}{dt}\left( 0\right) $. Now,%
\begin{equation*}
\kappa B\left( 0\right) =T\left( 0\right) \times \kappa N\left( 0\right)
=Ve_{0}\times \left( V^{2}e_{0}+ke_{0}\right) \text{,}
\end{equation*}%
and so, again by the Frenet equations,%
\begin{equation*}
\kappa ^{2}\tau =\left\langle \kappa \tfrac{DN}{dt}\left( 0\right) ,\kappa
B\left( 0\right) \right\rangle =\left\langle V^{3}e_{0}+kVe_{0},Ve_{0}\times
\left( V^{2}e_{0}+ke_{0}\right) \right\rangle \text{,}
\end{equation*}%
which yields the second expression in (\ref{kappaAndTau}) since $%
\left\langle u,u\times v\right\rangle =0$ for all $u,v$.
\end{proof}

\medskip

\begin{proof}[Proof of Proposition \protect\ref{helixKappaTau}]
Since the canonical projection $M_{k}\rightarrow M=\Gamma \backslash M_{k}$
is a direct local isometry and $G_{k}$ acts on $M_{k}$ by direct isometries,
it suffices to show that the helix $h\left( t\right) =e^{tV\left( c,\mu
\right) }p$ with $V\left( c,\mu \right) $ as in (\ref{VcMu}) and $p=\left(
\cos _{k}r,0,\sin _{k}r,0\right) $ satisfies the stated conditions.

In order to apply Lemma \ref{HelixBasico}, we translate $h$ to a helix $\bar{%
h}$ with $\bar{h}\left( 0\right) =e_{0}$. Let $g$ be the translation of $%
M_{k}$ along the geodesic joining $e_{0}$ with $p$, mapping the former point
to the latter, that is, $g$ is the unique element of $G_{k}$ with $g\left(
e_{0}\right) =p$ and fixing $e_{1}$ and $e_{3}$; in particular, $g\left(
e_{2}\right) =-k\sin _{k}r~e_{0}+\cos _{k}r~e_{2}$. Then, 
\begin{equation*}
\bar{h}\left( t\right) =g^{-1}h\left( t\right) =g^{-1}e^{V\left( c,\mu
\right) }p=g^{-1}e^{tV\left( c,\mu \right) }g\left( g^{-1}p\right)
=e^{tV}e_{0}\text{,}
\end{equation*}%
where $V=g^{-1}V\left( c,\mu \right) g$. A straightforward computation yields%
\begin{equation}
V=c\left( 
\begin{array}{cccc}
0 & -k\cos _{k}r & 0 & -k\mu \sin _{k}r \\ 
\cos _{k}r & 0 & -k\sin _{k}r & 0 \\ 
0 & k\sin _{k}r & 0 & -\mu \cos _{k}r \\ 
\mu \sin _{k}r & 0 & \mu \cos _{k}r & 0%
\end{array}%
\right) \text{.}  \label{matrixW}
\end{equation}

Since $\bar{h}^{\prime }\left( 0\right) =Ve_{0}=c\cos _{k}r~e_{1}+c\mu \sin
_{k}r~e_{3}$, the first assertion follows.

Using (\ref{matrixW}) and the unit speed condition (\ref{UnitSpeed}) on $h$,
we compute%
\begin{equation*}
V^{2}e_{0}=V\left( Ve_{0}\right) =-ke_{0}+c^{2}\left( k-\mu ^{2}\right) \cos
_{k}r\sin _{k}r~e_{2}\text{.}
\end{equation*}%
Hence, by the first expression in (\ref{kappaAndTau}), 
\begin{equation}
\kappa ^{2}=c^{4}\left( k-\mu ^{2}\right) ^{2}\cos _{k}^{2}r\sin _{k}^{2}r%
\text{.}  \label{curvatureProv}
\end{equation}

A straightforward computation yields%
\begin{equation*}
Ve_{0}\times \left( V^{2}e_{0}+ke_{0}\right) =c^{3}\left( \mu ^{2}-k\right)
\cos _{k}r\sin _{k}r\left( \mu \sin _{k}r~e_{1}-\cos _{k}r~e_{3}\right) 
\text{.}
\end{equation*}%
Again by (\ref{matrixW}), we have that 
\begin{eqnarray*}
V^{3}e_{0} &=&V\left( V^{2}e_{0}\right) =-kVe_{0}+c^{2}\left( k-\mu
^{2}\right) \cos _{k}r\sin _{k}r~Ve_{2} \\
&=&-kVe_{0}+c^{3}\left( k-\mu ^{2}\right) \cos _{k}r\sin _{k}r\left( -k\sin
_{k}r~e_{1}+\mu \cos _{k}r~e_{3}\right) \text{.}
\end{eqnarray*}%
Then, by the second expression in (\ref{kappaAndTau}) (since $\left\langle
Ve_{0},Ve_{0}\times u\right\rangle =0$ for all $u$),%
\begin{equation*}
\kappa ^{2}\tau =c^{6}\left( k-\mu ^{2}\right) ^{2}\cos _{k}^{2}r\sin
_{k}^{2}r\left\langle -k\sin _{k}r~e_{1}+\mu \cos _{k}r~e_{3},\cos
_{k}r~e_{3}-\mu \sin _{k}r~e_{1}\right\rangle \text{.}
\end{equation*}

Now, by (\ref{curvatureProv}) we get%
\begin{equation*}
\kappa ^{2}\tau =\kappa ^{2}c^{2}\mu \left( \cos _{k}^{2}r+k\sin
_{k}^{2}r\right) =\kappa ^{2}c^{2}\mu \text{.}
\end{equation*}%
Thus, $\tau =c^{2}\mu $ if $\kappa >0$.

Finally, we show that $\kappa ^{2}$ is equal to the first expression in (\ref%
{CurvAndTorsion}). We write $\kappa ^{2}$ in (\ref{curvatureProv}) as $%
\kappa ^{2}=AB$ with $A=c^{2}\left( \mu ^{2}-k\right) \sin _{k}^{2}r$ and $%
B=c^{2}\left( \mu ^{2}-k\right) \cos _{k}^{2}r$. By the identity $\cos
_{k}^{2}+k\sin _{k}^{2}=1$ and the unit speed condition (\ref{UnitSpeed}),
we have%
\begin{eqnarray*}
A &=&c^{2}\mu ^{2}\sin _{k}^{2}r-c^{2}k\sin _{k}^{2}r=1-c^{2}\cos
_{k}^{2}r-c^{2}k\sin _{k}^{2}r \\
&=&1-c^{2}\left( \cos _{k}^{2}r+k\sin _{k}^{2}r\right) =1-c^{2}\text{.}
\end{eqnarray*}%
Similarly,%
\begin{eqnarray*}
B &=&c^{2}\mu ^{2}\cos _{k}^{2}r-kc^{2}\cos _{k}^{2}r=c^{2}\mu ^{2}\left(
1-k\sin _{k}^{2}r\right) -kc^{2}\cos _{k}^{2}r \\
&=&c^{2}\mu ^{2}-kc^{2}\left( \mu ^{2}\sin _{k}^{2}r+\cos _{k}^{2}r\right)
=c^{2}\mu ^{2}-k\text{.}
\end{eqnarray*}%
Consequently, $\kappa ^{2}$ is as stated.
\end{proof}

We comment that using similar arguments one can obtain that $\kappa =\left(
1-c^{2}\right) \cot _{k}r$, a shorter expression for the curvature, but less
suitable for our purposes.

\begin{lemma}
\label{HelixF(0)=I}Let $k=0,1,-1$. Let $Z=\left( 
\begin{array}{cc}
0 & -kx^{T} \\ 
x & L_{z}%
\end{array}%
\right) \in \mathfrak{g}_{\kappa }$, with $x,z\in \mathbb{R}^{3}$ and let $%
\kappa ,\tau \in \mathbb{R}$, with $\tau =0$ if $\kappa >0$. 
Then the following assertions are equivalent:

\smallskip

\emph{a)} The helix $h\left( t\right) =e^{tZ}e_{0}$ has unit speed, curvature $\kappa $ and
torsion $\tau$, and satisfies that $F\left( 0\right) $ is the identity on $%
T_{e_{0}}M_{k}$.

\smallskip

\emph{b)} For $Z$, $x=e_{1}$ and $z=\tau e_{1}+\kappa e_{3}$ hold.
\end{lemma}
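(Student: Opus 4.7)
The approach is to apply Lemma \ref{HelixBasico} with $V=Z$ and convert each condition in (a)---unit speed, prescribed $\kappa$ and $\tau$, and $F(0)=\id$---into an algebraic restriction on $x$ and $z$. Under the identification $T_{e_0}M_k\equiv\mathbb{R}^3$ we read off $h'(0)=Ze_0=x$, so unit speed forces $\|x\|=1$ and $T(0)=F(0)(e_1)=e_1$ then pins down $x=e_1$.

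For the case $\kappa>0$ I would next use $\kappa N(0)=Z^2e_0+ke_0$ from Lemma \ref{HelixBasico}. A short block computation with $x=e_1$ gives $Z^2e_0+ke_0=z\times e_1$, and imposing $N(0)=e_2$ turns this into $z\times e_1=\kappa e_2$. Writing $z=(z_1,z_2,z_3)$, this forces $z_2=0$ and $z_3=\kappa$, so only the first coordinate remains to be determined. To identify it, I would feed $x=e_1$ and $z=z_1 e_1+\kappa e_3$ into the torsion identity $\kappa^2\tau=\langle Z^3e_0,\,Ze_0\times(Z^2e_0+ke_0)\rangle$. The cross product on the right simplifies to $z-z_1 e_1=\kappa e_3$, and a routine computation of $Z^3e_0$ collapses the bracket to $z_1\kappa^2$, yielding $z_1=\tau$. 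Hence $z=\tau e_1+\kappa e_3$, as in (b).

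The remaining case is $\kappa=0$, where the convention sets $\tau=0$. Then $z\times e_1=\kappa N(0)=0$ forces $z_2=z_3=0$, so the only surviving part of $Z$ in $\mathfrak{k}_k$ generates a rotation around $T$; for $N$ and $B$ to be parallel along the geodesic $h$, that rotation must be trivial, which forces $z_1=0=\tau$, again matching (b). The converse implication (b)$\Rightarrow$(a) follows by reversing these equivalences, since the same formulas recover the stated curvature and torsion and the parallel-transport argument shows $F(0)=\id$. The main obstacle I anticipate is the torsion computation, but it collapses cleanly because only the $z_1$-component of $z$ survives the relevant inner products.
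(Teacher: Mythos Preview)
For $\kappa>0$ your argument is correct and follows the paper's proof essentially step for step: both deduce $x=e_1$ from $h'(0)=Ze_0$ together with $T(0)=F(0)e_1=e_1$, then compute $Z^2e_0+ke_0=z\times e_1$ and use $N(0)=e_2$ to obtain $z_2=0$, $z_3=\kappa$, and finally invoke the torsion identity of Lemma~\ref{HelixBasico} to pin down the first component of $z$. Your use of the triple-product identity $e_1\times(z\times e_1)=z-z_1e_1$ is a cosmetic variation; the paper substitutes $z\times e_1=\kappa e_2$ first and then takes $e_1\times\kappa e_2=\kappa e_3$, arriving at the same $\kappa^2\tau=z_1\kappa^2$.

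Your treatment of $\kappa=0$, however, does not go through. The claim ``for $N$ and $B$ to be parallel along the geodesic $h$, that rotation must be trivial'' conflates the frame $t\mapsto d(e^{tZ})_{e_0}$ with the Frenet frame $F$. For a geodesic, $F$ is \emph{defined} by parallel transport of the chosen $F(0)$ and depends only on the curve $h$, not on the particular $Z$ producing it. Concretely, with $x=e_1$ and $z=z_1e_1$ the matrices $Z_0$ (the case $z=0$) and $W=\operatorname{diag}(0,L_{z_1e_1})$ commute, so $e^{tZ}e_0=e^{tZ_0}e^{tW}e_0=e^{tZ_0}e_0$: the curve is the \emph{same} geodesic for every $z_1$, and one may always pick a Frenet frame with $F(0)=\id$. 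Hence (a) holds while (b) requires $z_1=0$. The paper's own proof also stops at ``$a=\tau$ if $\kappa\neq 0$'' and does not address this case; in fact the implication (a)$\Rightarrow$(b) as stated fails for $\kappa=0$. This does not damage the application in Theorem~\ref{geodesica}, where for geodesic $h$ only the direction (b)$\Rightarrow$(a) is really needed, but your extra paragraph does not supply a valid argument for the missing direction.
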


\begin{proof}
Suppose that $h$ has the stated properties. We have by the hypothesis that 
\begin{equation*}
e_{1}=F\left( 0\right) e_{1}=h^{\prime }\left( 0\right) =Ze_{0}=x\text{.}
\end{equation*}%
Then, by (\ref{kappaN(0)}) and the definition of $Z$ we have%
\begin{eqnarray*}
\kappa e_{2} &=&\kappa F\left( 0\right) e_{2}=\kappa N\left( 0\right)
=Z^{2}e_{0}+ke_{0}=Z\left( Ze_{0}\right) +ke_{0} \\
&=&Ze_{1}+ke_{0}=-ke_{0}+z\times e_{1}+ke_{0}=z\times e_{1}\text{.}
\end{eqnarray*}%
In particular, $Z^{2}e_{0}=-ke_{0}+\kappa e_{2}$. Setting $%
z=ae_{1}+be_{2}+ce_{3}$, we have then 
\begin{equation*}
\kappa e_{2}=z\times e_{1}=ce_{2}-be_{3}\text{,}
\end{equation*}%
which implies that $z=ae_{1}+\kappa e_{3}$. Now,%
\begin{eqnarray*}
Z^{3}e_{0} &=&Z\left( -ke_{0}+\kappa e_{2}\right) =-kZe_{0}+\kappa
Ze_{2}=-ke_{1}+\kappa \left( z\times e_{2}\right) \\
&=&-ke_{1}+\kappa \left( ae_{1}+\kappa e_{3}\right) \times e_{2}=-\left(
k+\kappa ^{2}\right) e_{1}+a\kappa e_{3}\text{.}
\end{eqnarray*}

Then, by the second identity in (\ref{kappaAndTau}),%
\begin{eqnarray*}
\kappa ^{2}\tau &=&\left\langle Z^{3}e_{0},Ze_{0}\times \left(
Z^{2}e_{0}+ke_{0}\right) \right\rangle =\left\langle -\left( k+\kappa
^{2}\right) e_{1}+a\kappa e_{3},e_{1}\times \kappa e_{2}\right\rangle \\
&=&\left\langle a\kappa e_{3},\kappa e_{3}\right\rangle _{\kappa }=\kappa
^{2}a\text{.}
\end{eqnarray*}%
Thus, $a=\tau $ if $\kappa \neq 0$, as desired. The converse follows the
same lines, in a simpler manner.
\end{proof}

\begin{proof}[Proof of Proposition \protect\ref{PeriodicHelix}]
The proposition is very similar to Lemma 8 in \cite{CS}, so we only comment
on the differences. The lemma does not deal with the (simpler) flat case;
also, the notation and the parameters vary: $E=\alpha $, $T_{0}=L$, $T=\ell
/c$ is the period of $\alpha $, $\phi _{t}=\exp \left( tV\left( c,\mu
\right) \right) $ (see (\ref{VcMu})) and $b=c\mu $ (in the present paper we
have recognized the convenience of introducing the angular speed of a helix: 
$\mu $ is geometrically more relevant than $b$).
\end{proof}

\section{Proofs of the results\label{Section6}}

\subsection{Sub-Riemannian geodesics\nopunct}

\mbox{ }

\smallskip

For $k=0,1,-1$ and $\lambda \neq k^{2}$, let $G_{k}$ be endowed with the $%
\lambda $-screw sub-Riemannian structure defined in the introduction. We
recall Proposition 1.4 in \cite{HMS}, which describes the maximal
sub-Riemannian geodesics through the identity.

\begin{proposition}
\label{PropHMS} \emph{\cite{HMS}} A curve in $G_{\kappa }$ is a
sub-Riemannian geodesic through the identity if and only if it equals $%
\gamma _{x,y}$ for some $x,y\in \mathbb{R}^{3}$, where, for all $t$, 
\begin{equation}
\gamma _{x,y}\left( t\right) =\exp \left( t\left( 
\begin{array}{cc}
0 & -\kappa x^{T} \\ 
x & L_{\lambda x+y}%
\end{array}%
\right) \right) \exp \left( t\left( 
\begin{array}{cc}
0 & 0 \\ 
0 & -L_{y}%
\end{array}%
\right) \right) \text{.}  \label{gammaXY}
\end{equation}
\end{proposition}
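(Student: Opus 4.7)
The plan is to combine the Pontryagin Maximum Principle with left-invariance: derive the Hamiltonian geodesic equations on the dual Lie algebra, verify by substitution that the ansatz $\gamma_{x,y}$ solves them, and invoke uniqueness of the Hamiltonian flow to cover all cases.

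First I would check horizontality and constant speed of $\gamma_{x,y}$ directly. Setting
\[
A=\begin{pmatrix} 0 & -kx^{T}\\ x & L_{\lambda x+y}\end{pmatrix},\qquad
B=\begin{pmatrix} 0 & 0\\ 0 & -L_{y}\end{pmatrix}\in\mathfrak{k}_{k},
\]
so that $\gamma_{x,y}(t)=e^{tA}e^{tB}$, one computes $\gamma_{x,y}(t)^{-1}\gamma_{x,y}'(t)=\operatorname{Ad}_{e^{-tB}}(A)+B$. Since $B$ commutes with itself ($\operatorname{Ad}_{e^{-tB}}B=B$) and $A+B=D^{\lambda}(x)$, this reduces to $\operatorname{Ad}_{e^{-tB}}D^{\lambda}(x)$. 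Using the covariance $RL_{x}R^{-1}=L_{Rx}$ for $R\in SO(3)$, a direct block-matrix computation yields $\operatorname{Ad}_{e^{-tB}}D^{\lambda}(x)=D^{\lambda}(\operatorname{Rot}(y,t)x)$, so $\gamma_{x,y}$ is horizontal with constant speed $\|x\|$.

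Next I would address the optimal-control side. The hypothesis $\lambda\neq k^{2}$ ensures $\mathcal{D}^{\lambda}$ is bracket generating; a check of Goh-type conditions (or a direct analysis of the rank-$3$ annihilator in $\mathfrak{g}_{k}^{*}$) rules out nontrivial abnormal extremals, so every sub-Riemannian geodesic lifts to a normal Pontryagin extremal. Left-trivializing the costate to $\xi(t)\in\mathfrak{g}_{k}^{*}$ and using the orthonormal frame $\{D^{\lambda}(e_{i})\}_{i=1,2,3}$ of $\mathcal{D}_I^{\lambda}$, one obtains the reduced Hamiltonian $H(\xi)=\tfrac{1}{2}\sum_{i}\langle\xi,D^{\lambda}(e_{i})\rangle^{2}$, the Arnold equation $\dot{\xi}=\operatorname{ad}_{dH(\xi)}^{*}\xi$, and the base dynamics $g(t)^{-1}g'(t)=dH(\xi(t))\in\mathcal{D}_{I}^{\lambda}$.

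The decisive step is integrating this system and recognising the two-factor form. I would exploit the right $SO(3)$-action on $(G_{k},\mathcal{D}^{\lambda})$ by sub-Riemannian isometries, noted in Section \ref{Section2}: this symmetry produces a momentum map whose conservation forces the $\mathfrak{k}_{k}$-component of $\xi$ to evolve by conjugation under a single one-parameter subgroup of $SO(3)$, which I encode as $e^{tL_{y}}$ for some $y\in\mathbb{R}^{3}$. Taking $x$ with $D^{\lambda}(x)=dH(\xi(0))$, substitution (using the first step) shows that $\gamma_{x,y}$ solves the coupled system with costate $\xi(t)=\operatorname{Ad}_{e^{-tB}}^{*}\xi(0)$; since $(x,y)\in\mathbb{R}^{3}\times\mathbb{R}^{3}$ exhausts the initial data $(I,\xi(0))$ and Hamiltonian solutions are unique, every normal extremal through $I$ must be of this form. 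The main obstacle I anticipate is making the $SO(3)$-momentum conservation precise enough to pin down the vector $y$, and thereby the shift $\lambda x+y$ inside the first exponential; once this identification is in place, everything else reduces to matrix algebra in $\mathfrak{so}(3)$ and standard sub-Riemannian control theory.
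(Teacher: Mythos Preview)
The paper does not give a proof of this proposition at all: it is quoted verbatim as Proposition~1.4 of \cite{HMS} and used as a black box. So there is no ``paper's own proof'' to compare against here.

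Your outline is a plausible route to the result and your horizontality computation is correct: with $A+B=D^{\lambda}(x)$ and $\operatorname{Ad}_{e^{-tB}}D^{\lambda}(x)=D^{\lambda}(\operatorname{Rot}(y,t)x)$, the curve $\gamma_{x,y}$ is indeed horizontal of constant speed $\|x\|$. However, two steps are only gestured at. First, the assertion that there are no nontrivial strictly abnormal extremals is not obvious for a rank-$3$ distribution in a six-dimensional group; invoking ``Goh-type conditions'' is not a proof, and you would need to carry out that analysis explicitly. Second, and more seriously, you have not actually integrated the Hamiltonian system: you assert that the $SO(3)$-momentum conservation ``forces the $\mathfrak{k}_{k}$-component of $\xi$ to evolve by conjugation under a single one-parameter subgroup,'' but you neither write down the coadjoint equation on $\mathfrak{g}_{k}^{*}$ nor verify that the candidate $\xi(t)=\operatorname{Ad}_{e^{-tB}}^{*}\xi(0)$ satisfies it together with the base equation. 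You also do not show that the map $(x,y)\mapsto\xi(0)$ hits every normal initial costate. These are the genuine content of the proposition, and at present they are assumed rather than established.
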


\medskip

\begin{proof}[Proof of Theorem \protect\ref{geodesica}]
We may consider only the case $M=M_{k}$, since the canonical projection $%
SO\left( M_{k}\right) \rightarrow SO\left( M\right) =\Gamma \backslash
SO\left( M_{k}\right) $ is a local sub-Riemannian isometry. First we prove
that $\gamma $ as in (\ref{s-Rgeodesic}) is a geodesic of $\left( SO\left(
M_{k}\right) ,\mathcal{D}^{\lambda }\right) $. By left $G_{k}$-invariance
and right invariance by $SO\left( 3\right) $, we may suppose without loss of
generality that $h\left( 0\right) =e_{0}$, $F\left( 0\right) $ is the
identity on $T_{e_{0}}M_{k}\equiv \mathbb{R}^{3}$ and $O=I$.

By Lemma \ref{HelixF(0)=I}, $h\left( t\right) =e^{tZ}e_{0}$, where%
\begin{equation*}
Z=\left( 
\begin{array}{cc}
0 & -ke_{1}^{T} \\ 
e_{1} & L_{\tau e_{1}+\kappa e_{3}}%
\end{array}%
\right) \in \mathfrak{g}_{\kappa }\text{.}
\end{equation*}%
We write $\tau e_{1}+\kappa e_{3}=\lambda e_{1}+y$ with $y=\left( \tau
-\lambda \right) e_{1}+\kappa e_{3}$ and set $Y=$ diag$~\left(
0,L_{y}\right) \in \mathfrak{so}\left( 3\right) $.

By Proposition \ref{PropHMS}, $\delta \left( t\right) =e^{tZ}e^{-tY}$ is a
geodesic in $G_{k}$ such that $\delta \left( t\right)
e_{0}=e^{tZ}e_{0}=h\left( t\right) $, since $e^{-tY}\in SO\left( 3\right) $
fixes $e_{0}$ for all $t$. Now, as $F\left( 0\right) $ is the identity on $%
T_{e_{0}}M_{k}\equiv \mathbb{R}^{3}$, the corresponding sub-Riemannian
geodesic of $SO\left( M_{k}\right) $ is equal to $\Phi \left( \delta \left(
t\right) \right) =\left( h\left( t\right) ,b\left( t\right) \right) $, with 
\begin{equation}
b\left( t\right) =e^{tZ}F\left( 0\right) e^{-tY}=F\left( t\right) \exp
_{SO\left( 3\right) }\left( -tL_{y}\right) =F\left( t\right) \operatorname{Rot}%
\left( \left( \lambda -\tau \right) e_{1}-\kappa e_{3},t\right) \text{,}
\label{b(t)}
\end{equation}%
as desired. Also, it is well-known that in our situation, if $\sigma $ is a
sub-Riemannian geodesic, so is the curve $t\mapsto \sigma \left( dt\right) $
for $d\geq 0$.

For the converse, suppose that $\gamma $ is a geodesic of $\left( SO\left(
M_{k}\right) ,\mathcal{D}^{\lambda }\right) $ projecting to a helix $h$ with
Frenet frame $F$. By the $\left( G_{k}\times SO\left( 3\right) \right) $%
-invariance we may assume that $h\left( 0\right) =e_{0}$ and $\gamma \left(
0\right) $ is equal to $F\left( 0\right) $ and also to the identity on $%
T_{e_{0}}M_{k}\equiv \mathbb{R}^{3}$. Then $\left( \Phi ^{-1}\gamma \right)
\left( 0\right) =I$ and so, by Proposition \ref{PropHMS}, $\gamma =\Phi
\circ \gamma _{x,y},$ with $\gamma _{x.y}$ as in (\ref{gammaXY}) for some $%
x,y\in \mathbb{R}^{3}$. Hence, $h\left( t\right) =\exp \left( tZ\right)
e_{0} $, where $Z$ is as in Lemma \ref{HelixF(0)=I}, with $z=\lambda x+y$.
If $h$ has curvature $\kappa $ and torsion $\tau $, by the same lemma, $%
z=\tau e_{1}+\kappa e_{3}$. The argument continues as in (\ref{b(t)}).
\end{proof}

\subsection{Sub-Riemannian length spectrum\label{ProofsSRLS} \nopunct}

\mbox{ }

\smallskip

\begin{proof}[Proof of Proposition \protect\ref{fundamentalProp}]
The assertion in (a) is clearly true. Next we prove (b). Since $\alpha $ has
unit speed, its period is $\ell $. Let $F$ be a Frenet frame for $\alpha $
as in (\ref{FrenetGeo}). By definition of the holonomy of $\alpha $, for all 
$t$ we have 
\begin{equation}
F\left( t+\ell \right) =\operatorname{Rot}\left( \alpha ^{\prime }\left( t\right)
,\theta \right) F\left( t\right) =F\left( t\right) \operatorname{Rot}\left(
e_{1},\theta \right) \text{,}  \label{FR}
\end{equation}%
since $F\left( t+\ell \right) e_{2}=N\left( t+\ell \right) =\mathcal{T}%
_{t,t+\ell }N\left( t\right) =\mathcal{T}_{t,t+\ell }F\left( t\right) e_{2}$%
, and similarly for $e_{3}$ and $B$.

Suppose that $\gamma $ is periodic with period $T$. Then, by (a), $T=n\ell $
for some positive integer $n$. For brevity we call $R_{\kappa ,\tau
}^{\lambda }\left( t\right) =\operatorname{Rot}\left( \left( \lambda -\tau
,0,-\kappa \right) ,t\right) $. By (\ref{FR}), we have%
\begin{eqnarray*}
\left( \alpha \left( t\right) ,F\left( t\right) R_{0,0}^{\lambda }\left(
t\right) O\right) &=&\gamma \left( t\right) =\gamma \left( t+n\ell \right) \\
&=&\left( \alpha \left( t+n\ell \right) ,F\left( t+n\ell \right)
R_{0,0}^{\lambda }\left( t+n\ell \right) O\right) \\
&=&\left( \alpha \left( t\right) ,F\left( t\right) \operatorname{Rot}\left(
e_{1},n\theta \right) R_{0,0}^{\lambda }\left( t\right) R_{0,0}^{\lambda
}\left( n\ell \right) O\right) \text{.}
\end{eqnarray*}
for all $t$. 
Hence, $I=\operatorname{Rot}\left( e_{1},n\theta \right) R_{0,0}^{\lambda }\left(
n\ell \right) =\operatorname{Rot}\left( e_{1},n\theta +\lambda n\ell \right) $ and
so there exists an integer $m$ such that 
\begin{equation}
n\left( \theta +\lambda \ell \right) =2m\pi \text{.}  \label{CondicionCorta}
\end{equation}%
Next we see that $n$ and $m$ are coprime. Again by (\ref{FR}) we have that%
\begin{eqnarray*}
\gamma \left( t+\tfrac{n}{\left( n,m\right) }\ell \right) &=&\left( \alpha
\left( t+\tfrac{n}{\left( n,m\right) }\ell \right) ,F\left( t+\tfrac{n}{%
\left( n,m\right) }\ell \right) R_{0,0}^{\lambda }\left( t+\tfrac{n}{\left(
n,m\right) }\ell \right) O\right) \\
&=&\left( \alpha \left( t\right) ,F\left( t\right) \operatorname{Rot}\left( e_{1},%
\tfrac{n}{\left( n,m\right) }\theta \right) R_{0,0}^{\lambda }\left( t+%
\tfrac{n}{\left( n,m\right) }\ell \right) O\right) \text{,}
\end{eqnarray*}%
which, by (\ref{CondicionCorta}) is equal to%
\begin{equation*}
\left( \alpha \left( t\right) ,F\left( t\right) \operatorname{Rot}\left(
e_{1},\lambda t+\tfrac{2m}{\left( n,m\right) }\pi \right) O\right) =\left(
\alpha \left( t\right) ,F\left( t\right) \operatorname{Rot}\left( e_{1},\lambda
t\right) O\right) =\gamma \left( t\right) \text{.}
\end{equation*}%
Thus, $\left( m,n\right) =1$, since otherwise, $n\ell $ would not be the
period of $\gamma $. The last assertion follows from the definition of the
sub-Riemannian structure in (\ref{s-Rmetric}).

The converse follows the same lines of reasoning.

\smallskip

c) Since $h$ has unit speed, its period is $L$, as well as the period of its
Frenet frame (as $\kappa >0$, by hypothesis). Suppose that $\gamma $ is
periodic with period $T$. Then $T=nL$ for some positive integer $n.$ For all 
$t$ we have%
\begin{eqnarray*}
\left( \alpha \left( t\right) ,F\left( t\right) R_{\kappa ,\tau }^{\lambda
}\left( t\right) O\right) &=&\gamma \left( t\right) =\gamma \left( t+T\right)
\\
&=&\left( \alpha \left( t+T\right) ,F\left( t+T\right) R_{\kappa ,\tau
}^{\lambda }\left( t+T\right) O\right) \\
&=&\left( \alpha \left( t\right) ,F\left( t\right) R_{\kappa ,\tau
}^{\lambda }\left( t\right) R_{\kappa ,\tau }^{\lambda }\left( T\right)
O\right) \text{.}
\end{eqnarray*}%
Hence,%
\begin{equation*}
I=R_{\kappa ,\tau }^{\lambda }\left( T\right) =\exp _{SO\left( 3\right)
}\left( TL_{\left( \lambda -\tau ,0,-\kappa \right) }\right) \text{,}
\end{equation*}%
which implies that $T\sqrt{\kappa ^{2}+\left( \lambda -\tau \right) ^{2}}%
=2m\pi $ for some positive integer $m$.

The preceding arguments imply the validity of (\ref{lengthNL}).
\end{proof}

\begin{proof}[Proof of Proposition \protect\ref{PropoBasic}]
The assertion in (a) is a consequence of Proposition \ref{fundamentalProp}
(a), since circles are exactly the periodic helices in $M_{k}$. Now we prove
(b). It is well known that the length $L$ of a circle of radius $r$ in $%
M_{k} $ is equal to $2\pi \sin _{k}r$ and its curvature is $\kappa =\cot
_{k}r$. Hence, Equation (\ref{FundamentalEqShort}) reads 
\begin{equation*}
n2\pi \sin _{k}r\sqrt{\lambda ^{2}+\cot _{k}^{2}r}=2m\pi \text{,}
\end{equation*}%
which is not difficult to translate into (\ref{sinRadius}), using that $\cos
_{k}^{2}+k\sin _{k}^{2}=1$. The last assertion follows from (\ref{lengthNL}).
\end{proof}

\begin{proof}[Proof of Theorem \protect\ref{TeoFormulaLarga}]
Suppose that $h$ is the projection of a periodic sub-Riemannian geodesic $%
\gamma $. By the arguments before the statement of the theorem, there exist
coprime positive integers $n,m$ such that $c^{2}$ satisfies Equation (\ref%
{EqN}). Now, $0<c^{2}<1$ by Proposition \ref{helixKappaTau}, and so the
coefficient of $c^{2}$ is positive and bigger than $\lambda ^{2}-k$. Calling 
$U$ the first term of the coefficient of $c^{2}$, the condition translates
into%
\begin{equation*}
U+\left( \lambda ^{2}-k\right) -\left( \mu -\lambda \right) ^{2}>\lambda
^{2}-k\text{,}
\end{equation*}%
which is equivalent to $U>\left( \mu -\lambda \right) ^{2}$ and also to (\ref%
{conditionMN}). One solves for $\sin _{k}^{2}r$ using the identity $\cos
_{k}^{2}+k\sin _{k}^{2}=1$ and obtains (\ref{radiusBig}).

By (\ref{lengthNL}) and the first expression in (\ref{DisplayPer}), the
length of $\gamma $ is given by $nL=nq\ell /c$ and turns out to be as in (%
\ref{lengthBig}) solving for $c$ in (\ref{EqN}). The converse follows the
same lines of reasoning.
\end{proof}

\begin{proof}[Proof of Theorem \protect\ref{unionSpectrum}]
We first observe that the length of a periodic curve does not depend on the
parametrization and hence, in the case of a sub-Riemannian geodesic of $%
\left( SO\left( M\right) ,\mathcal{D}^{\lambda }\right) $, we may assume
that its projection to $M$ (which is a helix) has unit speed.

Proposition \ref{fundamentalProp} (b) and Theorem \ref{TeoFormulaLarga} give
the lengths of all periodic sub-Riemannian geodesics projecting to periodic
geodesics and to periodic helices with axes in $M$. The remaining helices
are either circles and horocycles (that is, curves in $H^{3}$ with $\tau =0$
and $\kappa =1$).

On the one hand, the length spectrum of $SO\left( M_{k}\right) $ is
contained in the length spectrum of $SO\left( M\right) $, since the
projection of the former to the latter is a local isometry. Besides, a
circle in $M_{k}$ cannot project to a periodic curve with smaller period,
since this would imply the existence of an elliptic element of $\Gamma $ (a
rotation around the center) with finite order; this contradicts that $\Gamma 
$ is the fundamental group of a smooth manifold. Then, Proposition \ref%
{PropoBasic} (b) provides all the lengths of sub-Riemannian geodesics of $%
SO\left( M\right) $ projecting to circles.

On the other hand, a horocycle cannot project to a periodic curve, since
otherwise there would be a parabolic element of $\Gamma $ translating it,
and this is not possible, since the manifold has positive injectivity radius.
\end{proof}

%\smallskip

\noindent Marcos Salvai\newline
\noindent FAMAF (Universidad Nacional de C\'ordoba) and CIEM (Conicet) \newline
\noindent Ciudad Universitaria, X5000HUA C\'{o}rdoba, Argentina \newline
\noindent salvai@famaf.unc.edu.ar


\begin{thebibliography}{99}
\bibitem{Agra} A.\thinspace Agrachev, D.\thinspace Barilari, and
U.\thinspace Boscain. \emph{A comprehensive introduction to sub-Riemannian
geometry. From the Hamiltonian viewpoint.} With an appendix by Igor Zelenko.
Cambridge Studies in Advanced Mathematics, 181. Cambridge University Press,
Cambridge, 2020. 

\bibitem{AgraIntrinsic} A.\thinspace Agrachev, U.\thinspace Boscain, J.-P.
Gauthier, and F.\thinspace Rossi, The intrinsic hypoelliptic Laplacian and
its heat kernel on unimodular Lie groups. \emph{J. Funct. Anal.} \textbf{256 
}(2009), no.\thinspace 8, 2621--2655. 

\bibitem{BZ} V.\thinspace N. Berestovski\u{\i}, and I.\thinspace A.
Zubareva, Sub-Riemannian distance on the Lie group $SO_{0}(2,1)$. \emph{St.
Petersbg. Math. J.} \textbf{28} (2017), no. 4, 477--489; 
translation from Algebra Anal. 28, no.\thinspace 4, 62--79 (2016).

\bibitem{Boscain} U.\thinspace Boscain, T.\thinspace Chambrion, and J.-P.
Gauthier, On the $K+P$ problem for a three-level quantum system: Optimality
implies resonance. \emph{J. Dynam. Control Systems} \textbf{8} (2002),
no\thinspace 4, 547--572. 

\bibitem{Boscain2} U.\thinspace Boscain, and F.\thinspace Rossi, Invariant
Carnot-Caratheodory metrics on $S^{3}$, SO$(3)$, SL$(2)$, and lens spaces. 
\emph{SIAM J. Control Optim.} \textbf{47} (2008), no.\thinspace 4,
1851--1878. 

\bibitem{Cardona} D.\thinspace Cardona, and M.\thinspace Ruzhansky, Subelliptic
pseudo-differential operators and Fourier integral operators on compact Lie
groups. arXiv:2008.09651 [math.AP]. To appear in MSJ Memoirs, Mathematical
Society of Japan, Tokyo.

\bibitem{CS} M.\thinspace Carreras, and M.\thinspace Salvai, Closed
geodesics in the tangent sphere bundle of a hyperbolic three-manifold. \emph{%
Tohoku Math. J., II. Ser.} \textbf{53 }(2001), no.\thinspace 1, 149--161.

\bibitem{ChangMarkinaV} D.-C. Chang, I.\thinspace Markina, and A.\thinspace
Vasil'ev, Hopf fibration: geodesics and distances. \emph{J. Geom. Phys.} 
\textbf{61} (2011), no.\thinspace 6, 986--1000. 

\bibitem{YCV} Y.\thinspace Colin de Verdi\`{e}re, Periodic geodesics for contact
sub-Riemannian 3D manifolds. arXiv:2202.13743 [math.DG]

\bibitem{CR} J.\thinspace H. Conway, and J.\thinspace P. Rossetti, Describing the Platycosms.
arXiv:math.DG/0311476

\bibitem{Domokos} A.\thinspace Domokos, Subelliptic Peter-Weyl and
Plancherel theorems on compact, connected, semisimple Lie groups. \emph{%
Nonlinear Anal., Theory Methods Appl., Ser. A, Theory Methods} \textbf{126}
(2015) 131--142.

\bibitem{Corey} A.\thinspace Domokos, M.\thinspace Krauel, V.\thinspace
Pigno, C.\thinspace Shanbrom, and M.\thinspace VanValkenburgh, Length
spectra of sub-Riemannian metrics on compact Lie groups. \emph{Pacific J.
Math.} \textbf{296} (2018), no. 2, 321--340. 
\bibitem{gluck} H.\thinspace Gluck, Geodesics in the unit tangent bundle of
a round sphere, \emph{Enseign. Math., II. S\'{e}r.} \textbf{34} (1988),
no.\thinspace 3-4, 233--246. 

\bibitem{GodoyM} M.\thinspace Godoy Molina, and I.\thinspace Markina,
Sub-Riemannian geodesics and heat operator on odd dimensional spheres. \emph{%
Anal. Math. Phys.} \textbf{2 }(2012), no.\thinspace 2, 123-147.

\bibitem{GrongModel} E.\thinspace Grong, Model spaces in sub-Riemannian
geometry. \emph{Comm. Anal. Geom.} \textbf{29} (2021), no.\thinspace 1,
77--113. 

\bibitem{Grong2011} E.\thinspace Grong, and A.\thinspace Vasil'ev,
Sub-Riemannian and sub-Lorentzian geometry on SU(1,1) and on its universal
cover. \emph{J. Geom. Mech.} \textbf{3 }(2011), no.\thinspace 2, 225--260.

\bibitem{HMS} E.\thinspace Hulett, R.-P. Moas, and M.\thinspace Salvai, The sub-Riemannian geometry
of screw motions with constant pitch, J. Geom. Anal. 2023,
https://doi.org/10.1007/s12220-023-01430-7

\bibitem{Involve} D.\thinspace Klapheck, M.\thinspace VanValkenburgh, The
length spectrum of the sub-Riemannian three-sphere. \emph{Involve} \textbf{%
12 }(2019), no.\thinspace 1, 45--61. 

\bibitem{ReidNuevo} C.\thinspace J. Leininger, D.\thinspace B. McReynolds,
W.\thinspace D. Neumann, and A.\thinspace W. Reid, Length and eigenvalue
equivalence. \emph{Int. Math. Res. Not.} 2007, (2007), no.\thinspace 24,
Article ID rnm135, 24 p. 

\bibitem{mayer87} R.\thinspace Meyerhoff, A lower bound for the volume of
hyperbolic 3-manifolds. \emph{Canad. J. Math.} \textbf{39} (1987),
no.\thinspace 5, 1038--1056. 

\bibitem{RM} R.\thinspace Montgomery, \emph{A tour of subriemannian
geometries, their geodesics and applications}. Mathematical Surveys and
Monographs 91, American Mathematical Society, Providence, RI, 2002. 

\bibitem{reid} A.\thinspace Reid, Isospectrality and commensurability of
arithmetic hyperbolic 2- and 3-manifolds, \emph{Duke Math. J.} \textbf{65 (}%
1992, no.\thinspace 2, 215-228. 

\bibitem{Sachkov22} Yu.\thinspace L. Sachkov, Left-invariant optimal control
problems on Lie groups: classification and problems integrable by elementary
functions. \emph{Russ. Math. Surv.} \textbf{77 }(2022), no.\thinspace 1,
99--163; translation from \emph{Usp. Mat. Nauk} \textbf{77 }(2022),
no.\thinspace 1, 109--176. 

\bibitem{salvai} M.\thinspace Salvai, Spectra of unit tangent bundles of
hyperbolic Riemann surfaces\textsl{.} \emph{Ann. Global Anal. Geom.} \textbf{%
16} (1998), no.\thinspace 4, 357--370. 

\bibitem{salvai2} M.\thinspace Salvai, On the Laplace and complex length
spectra of locally symmetric spaces of negative curvature, \emph{Math. Nachr.%
} \textbf{239/240} (2002), 198--203. 

\bibitem{SalvaiVorticity} Marcos Salvai, A split special Lagrangian
calibration associated with vorticity, Adv. Calc. Var. 2023,
https://doi.org/10.1515/acv-2022-0036
\end{thebibliography}
\end{document}